\newtheorem{theorem}{Theorem}[section]
\newtheorem{corollary}[theorem]{Corollary}
\newtheorem{lemma}[theorem]{Lemma}
\newtheorem{proposition}[theorem]{Proposition}
\theoremstyle{definition}
\newtheorem{definition}[theorem]{Definition}
\newtheorem{remark}[theorem]{Remark}
\newtheorem{example}[theorem]{Example}
\newtheorem{conjecture}[theorem]{Conjecture}
\newcommand{\Hom}{{\rm Hom}}
\newcommand{\Tor}{{\rm Tor}}
\newcommand{\Ext}{{\rm Ext}}
\newcommand{\CB}{{\rm CB}}
\newcommand{\CH}{{\rm CH}}
\newcommand{\colim}{{\rm colim}}
\renewcommand{\geq}{\geqslant}
\renewcommand{\leq}{\leqslant}
\newcommand{\xra}[1]{\xrightarrow{\ {#1}\ }}
\newcommand{\B}[1]{\mathbb{#1}}
\newcommand{\C}[1]{\mathcal{#1}}
\newcommand{\rmod}[1]{\text{\rm {\bf mod}-}{#1}}
\newcommand{\lmod}[1]{{#1}\text{\rm -{\bf mod}}}
\numberwithin{equation}{section}
\title{Jacobi-Zariski Exact Sequence for Hochschild Homology and Cyclic (Co)Homology}
\author{Atabey Kaygun}
\email{kaygun@itu.edu.tr}
\address{Department of Mathematics, Istanbul Technical University,  Istanbul, Turkey}
\begin{document}


\begin{abstract}
  We prove that for an inclusion of unital associative but not
  necessarily commutative $\Bbbk$-algebras $\C{B}\subseteq \C{A}$ we
  have long exact sequences in Hochschild homology and cyclic
  (co)homology akin to the Jacobi-Zariski sequence in Andr\'e-Quillen
  homology, provided that the quotient $\C{B}$-module $\C{A}/\C{B}$ is
  flat.  We also prove that for an arbitrary r-flat morphism
  $\varphi\colon\C{B}\to\C{A}$ with an H-unital kernel, one can
  express the Wodzicki excision sequence and our Jacobi-Zariski
  sequence in Hochschild homology and cyclic (co)homology as a single
  long exact sequence.
\end{abstract}

\maketitle

\section*{Introduction}

Let $\Bbbk$ be a ground field.  Assume we have an inclusion of
associative commutative unital $\Bbbk$-algebras $B\subseteq A$.  Then
for any $A$-bimodule $N$, one obtains a long exact sequence in
Andr\'e-Quillen homology~\cite[Thm.5.1]{Quillen:AQCohomology}
\[\cdots\to D_{n+1}(A|B;N)\to D_n(B|\Bbbk;N)\to 
D_n(A|\Bbbk;N) \to D_n(A|B;N)\to \cdots \] which is often referred as
the Jacobi-Zariski long exact
sequence~\cite[Sect.3.5]{Loday:CyclicHomology}.  In this paper we show
that there are analogous long exact sequences for ordinary
(co)homology, Hochschild homology and cyclic (co)homology of
$\Bbbk$-algebras of the form (written here for Hochschild homology)
\[ \cdots \to HH_{n+1}(\C{A}|\C{B})\to HH_n(\C{B}|\Bbbk)\to 
  HH_n(\C{A}|\Bbbk)\to HH_n(\C{A}|\C{B})\to\cdots 
\]
for $n\geq 1$.  We prove the existence under the condition that we
have an unital associative (not necessarily commutative) algebra
$\C{A}$ and a unital subalgebra $\C{B}$ such that the quotient
$\C{B}$-module $\C{A}/\C{B}$ is flat.  In the sequel, such inclusions
$\C{B}\subseteq \C{A}$ of unital $\Bbbk$-algebras are called {\em
  reduced-flat} ({\em r-flat} in short) extensions.  The condition of
r-flatness is slightly more restrictive than $\C{A}$ being flat over
$\C{B}$ but there are plenty of relevant examples. (See
Subsection~\ref{RFlat})

There are similar long exact sequences in the literature for other
cohomology theories of $\Bbbk$-algebras.  The relevant sequence we
consider here is the Wodzicki excision
sequence~\cite{Wodzicki:Excision} for Hochschild homology and cyclic
(co)homology (written here for Hochschild homology)
\[ \cdots\to HH_n(\C{I})\to HH_n(\C{B})\to HH_n(\C{A})\to
HH_{n-1}(\C{I}) \cdots \] for an epimorphism $\pi\colon\C{B}\to \C{A}$
of unital $\Bbbk$-algebras with an H-unital kernel $\C{I}:=ker(\pi)$
(Subsection~\ref{HUnital}).  The Wodzicki excision sequence
characterizes homotopy cofiber of the morphism of differential graded
$\Bbbk$-modules $\pi_*\colon \CH_*(\C{B})\to\CH_*(\C{A})$ induced by
$\pi$ as the suspended Hochschild complex $\Sigma\CH_*(\C{I})$ of the
ideal $\C{I}$.  Our Jacobi-Zariski sequence, on the other hand,
characterizes the same homotopy cofiber as the relative Hochschild
chain complex $\CH_*(\C{A}|\C{B})$ (relative \`a la Hochschild
\cite{Hochschild:RelativeHomology}) for a monomorphism $\C{B}\to
\C{A}$ of $\Bbbk$-algebras.

Now, assume $\varphi\colon \C{B}\to\C{A}$ is an arbitrary morphism of
unital $\Bbbk$-algebras such that $\C{I}:=ker(\varphi)$ is H-unital
and the quotient $\C{B}$-module $\C{A}/im(\varphi)$ is flat.  Under
these conditions, we show in Theorem~\ref{AllInOne} (written here for
Hochschild chain complexes) that homotopy cofiber $\CH_*(\C{A},\C{B})$
of the morphism $\varphi_*\colon \CH_*(\C{B})\to\CH_*(\C{A})$ induced
by $\varphi$ fits into a homotopy cofibration sequence of the form
\[ \Sigma\CH_{*\geq 1}(\C{I})\to \CH_{*\geq 1}(\C{A},\C{B})\to \CH_{*\geq 1}(\C{A}|\C{B}) \]
which gives us an appropriate long exact sequence.  As one can
immediately see, we get Wodzicki's characterization of the homotopy
cofiber when $\varphi$ is an epimorphism and our Jacobi-Zariski
characterization when $\varphi$ is a monomorphism.

\subsection*{Overview} In Section~\ref{Preliminaries} we review some
standard constructions and facts that we are going to need in the
course of proving our main result, mostly in order to establish
notation. Then in Section~\ref{JZT} we prove the existence of the
Jacobi-Zariski long-exact sequence for ordinary homology of algebras.
In Section~\ref{JZE} we gradually develop the same result for
cohomology under certain restrictions on the dimension of the algebra
then we remove those restrictions and place them on the coefficient
modules.  Finally in Section~\ref{MainResult}, we prove the existence
of the Jacobi-Zariski long exact sequence for Hochschild homology and
cyclic (co)homology of associative unital algebras, and then construct
a homotopy cofibration sequence extending both the Jacobi-Zariski
sequence and Wodzicki excision sequence in Hochschild homology and
cyclic (co)homology.

\subsection*{Standing assumptions and conventions}
We use $\Bbbk$ to denote our ground field.  We make no assumptions on
the characteristic of $\Bbbk$.  All unadorned tensor products
$\otimes$ are assumed to be over $\Bbbk$.  We will use
$\bigoplus_i X_i$ and $\sum_i X_i$ to denote respectively the external
and the internal sum of a collection of subspaces $\{X_i\}_{i\in I}$
of a $\Bbbk$-vector space $V$.  The $\Bbbk$-algebras we consider are
all unital and associative but not necessarily commutative. We make no
assumptions on the $\Bbbk$-dimensions of these algebras unless
otherwise is explicitly stated.  We will use $\C{A}^e$ to denote the
enveloping algebra $\C{A}\otimes \C{A}^{op}$ of an associative unital
algebra $\C{A}$.  We will use the term {\em parity} to denote the type
of an $\C{A}$-module: whether it is a left module or a right module.
We use the notation $C_{*\geq n}$ to denote the \emph{good truncation}
of $C_*$ at degree $n$, and $\Sigma C_*$ for the suspension of a
differential $\B{Z}$-graded $\Bbbk$-module $(C_*,d^C_*)$.

\subsection*{Acknowledgments}
We are grateful to the referee for pointing out the reference
\cite{PenaXi:HomologicalEpimorphisms} which allowed us to connect
H-unital ideals of Wodzicki~\cite{Wodzicki:Excision} and homological
epimorphisms of \cite{GeigleLenzing:PerpendicularCategories}.  We
would like to thank the referee also for pointing out the references
\cite{Happel:HochschildCohomologyOfFiniteDimensionalAlgebras,
  Cibils:TensorHochschild,
  MichelenaPlatzeck:HochschildCohomologyOfTriangularAlgebras} which
allowed us to better contextualize Conjecture~\ref{Conjecture}.  We
also thank Claude Cibils for pointing an error we made in the
published version of this preprint. The current version is corrected
by explicitly stating the Jacobi-Zariski sequence works only for
$p\geq 1$. An errata for the published version is to appear.

\section{Preliminaries}\label{Preliminaries}

\subsection{Relative (co)homology}
A monomorphism $f\colon X\to Y$ of $\C{A}$-modules is called an
$(\C{A},\C{B})$-monomorphism if $f$ is an monomorphism of
$\C{A}$-modules, and a split monomorphism of $\C{B}$-modules.  We
define $(\C{A},\C{B})$-epimorphisms similarly.  A short exact sequence
of $\C{A}$-modules $0\to X\xra{i} Y\xra{p} Z\to 0$ is called an
$(\C{A},\C{B})$-exact sequence if $i$ is an
$(\C{A},\C{B})$-monomorphism and $p$ is an
$(\C{A},\C{B})$-epimorphism.

An $\C{A}$-module $P$ is called an $(\C{A},\C{B})$-projective module
if for any $(\C{A},\C{B})$-epimorphism $f\colon X\to Y$ and a morphism
of $\C{A}$-modules $g\colon P \to Y$ there exists an $\C{A}$-module
morphism $g'\colon P\to X$ which satisfies $g = f\circ g'$
\[\xymatrix{
     & P \ar@{..>}[dl]_{g'} \ar[d]^g\\
X \ar[r]_f & Y \ar[r] & 0
}\]
$(\C{A},\C{B})$-injective modules are defined similarly.  Also, a
module $T$ is called $(\C{A},\C{B})$-flat if for every
$(\C{A},\C{B})$-short exact sequence $0\to X\to X'\to X''\to 0$ the
induced sequence of $\Bbbk$-modules
\[ 0\to X\otimes_\C{A} T\to X'\otimes_\C{A} T\to X''\otimes_\C{A} T\to
0 \] is exact.  Note that every $\C{A}$-projective (resp. $\C{A}$-flat
or $\C{A}$-injective) module is also $(\C{A},\C{B})$-projective
(resp. $(\C{A},\C{B})$-flat or $(\C{A},\C{B})$-injective) for any
unital subalgebra $\C{B}\subseteq \C{A}$.

\subsection{The bar complex}
The bar complex of a (unital) associative algebra $\C{A}$ is the
graded $\Bbbk$-module
\[ \CB_*(\C{A}) := \bigoplus_{n\geq 0} \C{A}^{\otimes n+2} \] together
with the differentials $d^\CB_n\colon \CB_n(\C{A})\to\CB_{n-1}(\C{A})$
which are defined by
\[ d^\CB_n(a_0\otimes\cdots\otimes a_{n+1})
   = \sum_{j=0}^{n} (-1)^j(\cdots\otimes a_ja_{j+1}\otimes\cdots)
\]
for any $n\geq 1$ and $a_0\otimes\cdots\otimes a_n\in \CB_n(\C{A})$.
The complex
\[ \CB_*(X;\C{A};Y) := X\otimes_\C{A} \CB_*(\C{A})\otimes_\C{A} Y \]
is called the two-sided (homological) bar complex of a pair $(X,Y)$ of
$\C{A}$-modules of opposite parity.  The cohomological counterpart
$\CB^*(X;\C{A};Z)$ for a pair of right $\C{A}$-modules $(X,Z)$ is
defined as
\[ \CB^*(X;\C{A};Z) := \Hom_\C{A}(\CB_*(X;\C{A};\C{A}),Z) \] The
corresponding bar complex for left modules is defined similarly using
$\CB_*(\C{A};\C{A};X)$.  Since the two-sided bar complex
$\CB_*(\C{A})$ is an $\C{A}^e$-projective resolution of the
$\C{A}$-bimodule $\C{A}$, the homology of the complex
$\CB_*(X;\C{A};Y)$ yields the $\Tor$-groups $\Tor^\C{A}_*(X,Y)$ for
the pair $(X,Y)$. Similarly, the $\Ext$-groups $\Ext_\C{A}^*(X,Z)$ for
the pair $(X,Z)$ come from the cohomological variant of the two-sided
bar complex $\CB^*(X;\C{A};Z)$.

To be technically correct, since our tensor products $\otimes$ are
taken over $\Bbbk$, the bar complexes we defined should be referred as
the {\em relative bar complexes} (relative to the base field) denoted
by $\CB_*(X;\C{A}|\Bbbk;Y)$ or $\CB^*(X;\C{A}|\Bbbk;Z)$.  However,
since $\Bbbk$ is a semi-simple subalgebra of $\C{A}$, the relative
(co)homology and the absolute (co)homology
agree~(cf. \cite[Prop.2.5]{Kaygun:CupProductII}). See also
\cite[Thm.1.2]{GerstenhaberSchack:RelativeHochschild}.

Now, we define the relative two-sided bar complexes
$\CB_*(X;\C{A}|\C{B};Y)$ and $\CB^*(X;\C{A}|\C{B};Z)$ similarly for
$\C{A}$-modules of the correct parity $X$, $Y$ and $Z$ where we
replace the tensor product $\otimes$ over $\Bbbk$ with the tensor
product $\otimes_\C{B}$ over a unital subalgebra $\C{B}$.  Since
$\C{A}\otimes_\C{B} \C{A}$ is $(\C{A},\C{B})$-projective by
\cite[Lem.2, pg. 248]{Hochschild:RelativeHomology}, we see that for
any right $\C{A}$-module $X$, the module $X\otimes_\C{B} \C{A}$ is a
$(\C{A},\C{B})$-projective module.  Then the relative complexes
$\CB_*(X;\C{A}|\C{B};Y)$ and $\CB^*(X;\C{A}|\C{B};Z)$ yield
respectively the relative torsion groups $\Tor^{(\C{A}|\C{B})}_*(X,Y)$
and the relative extension groups $\Ext_{(\C{A}|\C{B})}^*(X,Z)$.

\subsection{H-unital ideals}\label{HUnital}
A not necessarily unital $\Bbbk$-algebra $\C{I}$ is called {\em
  H-unital} if the bar complex $\CB_*(\C{I})$ of $\C{I}$ is a
resolution of $\C{I}$ viewed as a $\C{I}$-bimodule.  One can
immediately see that any unital algebra is H-unital.

Our definition of H-unitality differs from Wodzicki's original
definition given in \cite{Wodzicki:Excision}, but is still equivalent.
Wodzicki defines another complex $\CB'_*(\C{I})$ (he uses the notation
$B_*(\C{I})$)
\[ \CB'_*(\C{I}) = \bigoplus_{n\geq 0} \C{I}^{\otimes n+1} \]
together with the differentials
\[ d_n(x_0\otimes\cdots\otimes x_n)
   = \sum_{j=0}^{n-1} (-1)^j (\cdots\otimes x_j x_{j+1}\otimes\cdots)
\]
$\CB'_0(\C{I})$ is the differential graded $\Bbbk$-module
$\CB_*(\C{I})$ suspended once and augmented by $\C{I}$.  The fact that
$\CB_*(\C{I})$ is a resolution of $\C{I}$ is equivalent to the fact
that $\CB'_*(\C{I})$ is acyclic.  Because $\Bbbk$ is a field and every
$\Bbbk$-module is free, and therefore flat, the acyclicity of
$\CB'_*(\C{I})$ is equivalent to Wodzicki's definition of H-unitality
(cf.~\cite[pg.592]{Wodzicki:Excision}.)

\subsection{The Hochschild complex}\

The Hochschild complex of $\C{A}$ with coefficients in a
$\C{A}$-bimodule $M$ is the graded $\Bbbk$-module
\[ \CH_*(\C{A},M) := \bigoplus_{n\geq 0} M\otimes \C{A}^{\otimes n} \]
together with the differentials (traditionally denoted by $b$)
$b_n\colon \CH_n(\C{A},M)\to \CH_{n-1}(\C{A},M)$
\begin{align*}
  b_n(m\otimes a_1\otimes\cdots a_n)
  = & (ma_1\otimes a_2\otimes\cdots\otimes a_n)
      + \sum_{j=1}^{n-1} (-1)^j (m\otimes\cdots\otimes a_j a_{j+1}\otimes\cdots)\\
  & + (-1)^n (a_n m \otimes a_1\otimes\cdots\otimes a_{n-1})
\end{align*}
defined for $n\geq 1$ and $m\otimes a_1\otimes\cdots\otimes a_n\in
\CH_n(\C{A},M)$. One can define the relative Hochschild complex
$\CH_*(\C{A}|\C{B},M)$ similarly where we write the tensor
products over $\C{B}$ instead of $\Bbbk$.

For every $\C{A}$-bimodule $M$ the Hochschild complex $\CH_*(\C{A},M)$
can also be constructed as \[ \CH_*(\C{A},M) = M\otimes_{\C{A}^e}
\CB_*(\C{A}) \] where $\C{A}^e=\C{A}\otimes \C{A}^{op}$ is the
enveloping algebra of $\C{A}$.  Since the bar complex $\CB_*(\C{A})$
is a projective resolution of $\C{A}$ viewed as a $\C{A}$-bimodule,
the Hochschild homology groups we defined using the Hochschild complex
are $\Tor^{\C{A}^e}_n(\C{A},M)$
\cite[Prop.1.1.13]{Loday:CyclicHomology}.  One can similarly define
the Hochschild cohomology groups $HH^n(\C{A},M)$ as the extension
groups $\Ext_{\C{A}^e}^n(\C{A},M)$, which can also be computed as the
cohomology of the Hochschild cochain complex
$\CH^*(\C{A},M):=\Hom_{\C{A}^e}(\CB_*(\C{A}),M)$
\cite[Def.1.5.1]{Loday:CyclicHomology}.

We will use the notation $HH_*(\C{A})$ and $HH^*(\C{A})$ to denote
respectively $HH_*(\C{A},\C{A})$ and $HH^*(\C{A},\C{A})$ for a
(unital) $\Bbbk$-algebra $\C{A}$.

\subsection{Flat and r-flat extensions}\label{RFlat}
Assume $\C{B}\subseteq \C{A}$ is a unital subalgebra. This makes
$\C{A}$ into a $\C{B}$-bimodule.  Now, consider the following short
exact sequence of $\C{B}$-modules
\[ 0\to \C{B}\to \C{A}\to \C{A}/\C{B}\to 0 \]
Since $\C{B}$ is flat over itself, using the short exact sequence above
for every $\C{B}$-module $Y$ we get an exact sequence of $\Bbbk$-modules
of the form
\[ 0\to \Tor^\C{B}_1(\C{A},Y)\to
\Tor^\C{B}_1(\C{A}/\C{B},Y)\to \C{B}\otimes_\C{B}Y\to \C{A}\otimes_\C{B}Y
\to (\C{A}/\C{B})\otimes_\C{B}Y\to 0
\]
and an isomorphism of $\Bbbk$-modules $\Tor^\C{B}_n(\C{A},Y)\cong
\Tor^\C{B}_n(\C{A}/\C{B},Y)$ for every $n\geq 2$.  This means the flat
dimension (sometimes referred as the weak dimension) of $\C{A}/\C{B}$
is at most 1 when $\C{A}$ is a flat $\C{B}$-module.  On the other
hand, when the quotient $\C{A}/\C{B}$ is a flat $\C{B}$-module the
$\C{B}$-module $\C{A}$ must also be flat.
\begin{definition}\label{RFlatExtension}
  An inclusion of unital $\Bbbk$-algebras $\C{B}\subseteq\C{A}$ is
  called a {\em flat extension} if $\C{A}$ viewed as a
  $\C{B}$-bimodule is flat.  An inclusion of unital $\Bbbk$-algebras
  $\C{B}\subseteq\C{A}$ is called a {\em r-flat extension} if the
  quotient $\C{B}$-bimodule $\C{A}/\C{B}$ is $\C{B}$-flat.  A morphism
  of unital $\Bbbk$-algebras $\varphi\colon \C{B}\to\C{A}$ is called
  flat (resp. r-flat) if $im(\varphi)\subseteq \C{A}$ is a flat
  (resp. r-flat) extension.
\end{definition}

One can easily see that every r-flat extension is a flat extension.
Conversely, if $\C{A}$ is a flat extension over $\C{B}$ which is also
augmented, i.e. we have a unital algebra morphism
$\varepsilon\colon\C{A} \to \C{B}$ splitting the inclusion of algebras
$\C{B}\to \C{A}$, then it is also a r-flat extension.  In other words,
for augmented extensions flatness and r-flatness are equivalent.

\begin{example}
  The polynomial algebras $\C{B}[x_1,\ldots,x_n]$ with commuting
  indeterminates and the polynomial algebras $\C{B}\{x_1,\ldots,x_n\}$
  with non-commuting indeterminates are all r-flat extensions of a
  unital $\Bbbk$-algebra $\C{B}$.  In general, if $\C{G}$ is a monoid
  then the group algebra $\C{B}[\C{G}]$ of $\C{G}$ over $\C{B}$ gives
  us a r-flat extension $\C{B}\subseteq \C{B}[\C{G}]$.
\end{example}

\section{The Jacobi-Zariski Sequence for Torsion Groups}\label{JZT}

\begin{proposition}\label{Extension}
  Let $\C{I}$ be an H-unital ideal of a unital algebra $\C{B}$.
  Assume $X$ and $Y$ are $\C{B}/\C{I}$-modules of opposite parity.
  Then one has natural isomorphisms of the form
  \begin{equation}\label{TorIso}
    \Tor^\C{B}_p(X,Y)\cong \Tor^{\C{B}/\C{I}}_p(X,Y)
  \end{equation}
  for every $p\geq 0$.
\end{proposition}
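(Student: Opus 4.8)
The plan is to compute both Tor groups from the two-sided bar complex and to compare them. As recalled in the preliminaries, $\Tor^\C{B}_*(X,Y)=H_*\big(\CB_*(X;\C{B};Y)\big)$ with $\CB_n(X;\C{B};Y)=X\otimes\C{B}^{\otimes n}\otimes Y$, and similarly $\Tor^{\C{B}/\C{I}}_*(X,Y)=H_*\big(\CB_*(X;\C{B}/\C{I};Y)\big)$. The projection $\pi\colon\C{B}\to\C{B}/\C{I}$ induces a chain map $\phi\colon\CB_*(X;\C{B};Y)\to\CB_*(X;\C{B}/\C{I};Y)$, and since tensoring over the field $\Bbbk$ is exact, $\phi$ is surjective in every degree. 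Setting $K_*:=\ker\phi$ we get a short exact sequence of complexes
\[ 0\to K_*\to\CB_*(X;\C{B};Y)\xra{\phi}\CB_*(X;\C{B}/\C{I};Y)\to 0, \]
so by the resulting long exact homology sequence the assertion \eqref{TorIso} is equivalent to the acyclicity of $K_*$. At this point one records the two facts that make the hypotheses bite: as $X$ and $Y$ are $\C{B}/\C{I}$-modules we have $X\C{I}=0$ and $\C{I}Y=0$.

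To prove $K_*$ acyclic I would bring in H-unitality through a filtration. Fixing a $\Bbbk$-linear splitting $\C{B}\cong\C{I}\oplus(\C{B}/\C{I})$ identifies $K_n$ with the span of the tensors $x\otimes b_1\otimes\cdots\otimes b_n\otimes y$ having at least one factor $b_j\in\C{I}$. Filtering by the number of factors lying outside $\C{I}$ and passing to the associated graded, the idea is that the maximal consecutive runs of $\C{I}$-factors should decouple into blocks whose internal differential, multiplying adjacent $\C{I}$-factors, is exactly the differential of the complex $\CB'_*(\C{I})$ of Subsection~\ref{HUnital}. The conditions $X\C{I}=0$ and $\C{I}Y=0$ are what make this plausible: the two extreme face maps annihilate any tensor whose first or last factor lies in $\C{I}$, so the $\C{I}$-blocks at the ends appear as complete blocks rather than being truncated by the action on $X$ or $Y$.

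Granting such a decomposition, the proof concludes quickly. By definition of H-unitality the complex $\CB'_*(\C{I})$ is acyclic (equivalently $\CB_*(\C{I})$ resolves $\C{I}$), and over a field an acyclic complex is contractible; since $-\otimes M$ preserves chain homotopies, every block contributes a contractible, hence acyclic, factor. Thus the associated graded of $K_*$ vanishes, whence $K_*$ is acyclic and \eqref{TorIso} holds. Because every map involved is induced by $\pi$, the isomorphism is natural in $X$ and $Y$.

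The main obstacle is exactly the decoupling asserted in the second paragraph. A $\Bbbk$-linear complement of $\C{I}$ is not a subalgebra, so the differential also contains terms multiplying an $\C{I}$-factor into a neighboring outside factor, whose product lands in all of $\C{B}=\C{I}\oplus(\C{B}/\C{I})$; the $\C{I}$-component of such a product preserves the filtration degree and therefore survives to the leading differential, spoiling the naive block picture. Disposing of these mixed terms — say by refining the filtration or by assembling an explicit contracting homotopy for $K_*$ out of the $\Bbbk$-linear contraction of $\CB'_*(\C{I})$ supplied by H-unitality — is the technical heart, and is precisely the point where one needs genuine H-unitality and not merely the idempotency $\C{I}^2=\C{I}$. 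For a cleaner conceptual packaging one may instead invoke the Cartan--Eilenberg change-of-rings spectral sequence $E^2_{p,q}=\Tor^{\C{B}/\C{I}}_p\big(X,\Tor^\C{B}_q(\C{B}/\C{I},Y)\big)\Rightarrow\Tor^\C{B}_{p+q}(X,Y)$, which collapses to \eqref{TorIso} as soon as $\Tor^\C{B}_q(\C{B}/\C{I},Y)=0$ for $q\geq1$; by the long exact sequence of $0\to\C{I}\to\C{B}\to\C{B}/\C{I}\to0$ this in turn reduces to $\Tor^\C{B}_*(\C{I},Y)=0$, the same H-unital input seen from the other side.
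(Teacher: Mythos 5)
Your overall strategy --- reduce to the acyclicity of the kernel $K_*$ of $\CB_*(X;\C{B};Y)\to\CB_*(X;\C{B}/\C{I};Y)$, filter by the number of tensor factors lying outside $\C{I}$, and let H-unitality kill the associated graded --- is essentially the paper's proof (the paper filters the whole complex $\CB_*(X;\C{B};Y)$ by the number of tensor slots allowed to be all of $\C{B}$, the remaining slots being constrained to lie in $\C{I}$, and identifies the surviving $q=0$ row of the $E^1$-page with $\CB_*(X;\C{B}/\C{I};Y)$). The problem is that you stop exactly at the step that carries all the content, and the obstruction you cite as the reason for stopping is not actually there. You worry that multiplying an $\C{I}$-factor into a neighboring factor from the complement $V$ produces an element of all of $\C{B}$ whose $\C{I}$-component survives to the leading differential. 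But $\C{I}$ is a two-sided \emph{ideal}, not merely a subspace: $\C{I}\C{B}\subseteq\C{I}$ and $\C{B}\C{I}\subseteq\C{I}$, so the product of an $\C{I}$-factor with anything lands back in $\C{I}$, and the corresponding face map strictly \emph{decreases} the number of factors outside $\C{I}$; it therefore vanishes in the associated graded. The only face maps whose output has components in both summands of $\C{B}=\C{I}\oplus V$ are those multiplying two complementary factors $V\cdot V$, and both components again strictly decrease the count. Together with $X\C{I}=0=\C{I}Y$, this shows that the associated graded differential consists precisely of the internal multiplications of adjacent $\C{I}$-factors, which is exactly the block decomposition into copies of (the augmented) $\CB'_*(\C{I})$ that you wanted. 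No refinement of the filtration and no explicit contracting homotopy are needed; the argument closes as you outlined, and this is what the paper does.

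The Cartan--Eilenberg fallback does not rescue the argument as written: the collapsing hypothesis $\Tor^\C{B}_q(\C{B}/\C{I},Y)=0$ for $q\geq 1$ (together with $\C{B}/\C{I}\otimes_\C{B}Y\cong Y$) is precisely the case $X=\C{B}/\C{I}$ of the proposition being proved, since $\Tor^{\C{B}/\C{I}}_q(\C{B}/\C{I},Y)$ trivially vanishes for $q\geq 1$. H-unitality is a condition on the complex $\CB'_*(\C{I})$, and converting it into the vanishing of $\Tor^\C{B}_*(\C{I},Y)$ requires the same filtration argument (or an appeal to Wodzicki's theorem); calling it ``the same H-unital input seen from the other side'' defers the key step rather than discharging it. In short: right skeleton, same route as the paper, but the proof is left incomplete at its decisive point, and the perceived difficulty there is an artifact of treating $\C{I}$ as a mere subspace rather than an ideal.
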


\begin{proof}
  We begin by defining an auxiliary differential graded $\Bbbk$-module
  $I_*$. We augment $\CB'_*(\C{I})$ with $\Bbbk$ as follows
  \[ \Bbbk\xleftarrow{0}\C{I}\leftarrow\C{I}^{\otimes
    2}\leftarrow\C{I}^{\otimes 3}\leftarrow\cdots \] Since $\C{I}$ is
  H-unital, the homology is concentrated at degree $0$ where $H_0(I_*)
  = \Bbbk$ and $H_q(I_*)=0$ for every $q>0$. Then the product
  $X\otimes I_*\otimes Y$ is a differential graded $\Bbbk$-submodule
  of $\CB_*(X;\C{B};Y)$ since action of $\C{I}$ on $X$ and $Y$ are by
  $0$.  Now, we define an increasing filtration of differential graded
  $\Bbbk$-submodules $F^p_*\subseteq F^{p+1}_*$ of $\CB_*(X;\C{B};Y)$
  where for $0\leq p\leq n$ we let
  \[ F^p_n := \sum_{n_0+\cdots+n_p = n-p} 
     X\otimes \C{I}^{\otimes n_0}\otimes \C{B}\otimes \C{I}^{\otimes n_1}
      \otimes\cdots \otimes \C{B}\otimes \C{I}^{\otimes n_p}\otimes Y
  \]
  and allow $n_i=0$.  Then we define $F^p_n=F^{p+1}_n$ for every
  $p\geq n$, and let $F^{-1}_*:=0$. Note that $F^0_* = X\otimes
  I_*\otimes Y$ and $\bigcup_p F^p_*=\CB_*(X;\C{B};Y)$. In the
  associated spectral sequence of the filtration we get
  \[ E^1_{0,q} = H_q(F^0_*/F^{-1}_*) = H_q(X\otimes I_*\otimes Y)
  \cong X\otimes H_q(I_*)\otimes Y = 0 = E^\infty_{0,q} \] for every
  $q>0$ since $\C{I}$ is H-unital and $\otimes$ is an exact
  bifunctor. We also see that for $q<0$ \[ E^0_{p,q} =
  F^p_{p+q}/F^{p-1}_{p+q} = 0 = E^\infty_{p,q} \] because $F^p_n =
  F^{p+1}_n$ for every $p\geq n$.  Moreover, for any $p>0$ and for any
  $q\geq 0$
  \[ E^0_{p,q} = F^p_{p+q}/F^{p-1}_{p+q} \cong
  \bigoplus_{n_0+\cdots+n_p = q} X\otimes \C{I}^{\otimes
    n_0}\otimes(\C{B}/\C{I})\otimes \C{I}^{\otimes n_1} \otimes\cdots
  \otimes(\C{B}/\C{I})\otimes \C{I}^{\otimes n_p}\otimes Y
  \]
  Again, since the action of $\C{I}$ on $X$, $Y$ and $\C{B}/\C{I}$ are
  by 0, the quotient $F^p_*/F^{p-1}_*$ can be written as a product of
  differential graded modules
  \[ \Sigma^p(F^p_*/F^{p-1}_*) \cong X\otimes
  \underbrace{I_*\otimes(\C{B}/\C{I})\otimes\cdots\otimes
    I_*\otimes(\C{B}/\C{I})}_\text{$p$-times}\otimes I_*\otimes Y
  \]
  Our spectral sequence collapses because the homology of $I_*$ is
  concentrated at degree $0$. So, we get $E^1_{p,q}=0=E^\infty_{p,q}$
  for $q\neq 0$ and $p\geq 0$
  \[ E^1_{p,0} = H_p(F^p_*/F^{p-1}_*) = X\otimes(\C{B}/\C{I})^{\otimes
    p}\otimes Y \] which means $E^1_{*,0} = \CB_*(X;\C{B}/\C{I};Y)$.
  Hence we get $E^2_{p,0}=E^\infty_{p,0}=\Tor^{\C{B}/\C{I}}_p(X,Y)$.
  Our spectral sequence converges to the homology of
  $\CB_*(X;\C{B};Y)$ which is $\Tor^\C{B}_*(X,Y)$.  The result
  follows.
\end{proof}

\begin{remark}
  An epimorphism of algebras $\varphi\colon \C{B}\to \C{A}$ is called
  {\em a homological epimorphism} if the induced morphisms
  $\Ext_\C{A}^*(X,Y)\to \Ext_\C{B}^*(X,Y)$ is an isomorphism for every
  finitely generated $\C{A}$-modules $X$ and
  $Y$~\cite{GeigleLenzing:PerpendicularCategories}.  If $\C{B}$ is
  finite dimensional and we confine ourselves with finitely generated
  modules, then one has isomorphisms of the form~\eqref{TorIso} if and
  only if one has the same isomorphisms for Ext-groups. In other
  words, an epimorphism $\varphi\colon \C{B}\to\C{A}$ between two
  finite dimensional $\Bbbk$-algebras is a homology epimorphism when
  $ker(\varphi)$ is H-unital because of
  Proposition~\ref{Extension}. We would like to reiterate here again
  that we make no assumptions on the $\Bbbk$-dimensions of the
  algebras we work with, neither do we assume that our modules are
  finitely generated.
\end{remark}

\begin{theorem}\label{JZHomology}
  Assume $\C{B}$ is a unital $\Bbbk$-algebra and let $\C{B}\subseteq
  \C{A}$ be a r-flat extension.  Then $p\geq 1$ we have a long exact 
  sequence of the form
  \begin{equation}\label{LESHomology}
  \cdots\to \Tor^{(\C{A}|\C{B})}_{p+1}(X,Y)\to\Tor^\C{B}_p(X,Y)\to 
  \Tor^\C{A}_p(X,Y)\to \Tor^{(\C{A}|\C{B})}_p(X,Y)\to\cdots   
  \end{equation}
  for any right $\C{A}$-module $X$ and any left $\C{A}$-module $Y$
  where the last map
  $\Tor^\C{A}_1(X,Y)\to \Tor^{(\C{A}|\C{B})}_1(X,Y)$ in the sequence
  is an epimorphism.
\end{theorem}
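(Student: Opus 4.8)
The plan is to realize the sequence \eqref{LESHomology} as the long exact homology sequence of a short exact sequence of chain complexes, with the relative bar complex appearing as the homotopy cofiber of the map induced by the inclusion $\C{B}\subseteq\C{A}$, exactly as advertised in the introduction.

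First I would observe that the inclusion $\C{B}\hookrightarrow\C{A}$ induces a chain map $\iota\colon\CB_*(X;\C{B};Y)\to\CB_*(X;\C{A};Y)$. After the usual simplifications $X\otimes_\C{B}\CB_n(\C{B})\otimes_\C{B}Y\cong X\otimes\C{B}^{\otimes n}\otimes Y$ and $X\otimes_\C{A}\CB_n(\C{A})\otimes_\C{A}Y\cong X\otimes\C{A}^{\otimes n}\otimes Y$, the map $\iota$ is in each degree the map $X\otimes\C{B}^{\otimes n}\otimes Y\to X\otimes\C{A}^{\otimes n}\otimes Y$ obtained by tensoring the inclusions. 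Since all tensor products here are over the field $\Bbbk$ and $\C{B}^{\otimes n}\hookrightarrow\C{A}^{\otimes n}$ is injective, $\iota$ is degreewise injective. Writing $Q_*$ for its cokernel, the short exact sequence of complexes $0\to\CB_*(X;\C{B};Y)\xra{\iota}\CB_*(X;\C{A};Y)\to Q_*\to0$ yields a long exact sequence $\cdots\to\Tor^\C{B}_p(X,Y)\to\Tor^\C{A}_p(X,Y)\to H_p(Q_*)\to\Tor^\C{B}_{p-1}(X,Y)\to\cdots$. After reindexing, this is exactly \eqref{LESHomology} once the cofiber term is correctly identified, so everything reduces to producing natural isomorphisms $H_p(Q_*)\cong\Tor^{(\C{A}|\C{B})}_p(X,Y)$.

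For that identification I would compare $Q_*$ with the relative bar complex $\CB_*(X;\C{A}|\C{B};Y)$, whose degree-$n$ term is $X\otimes_\C{B}\C{A}^{\otimes_\C{B}n}\otimes_\C{B}Y$. The quotient maps $\otimes\twoheadrightarrow\otimes_\C{B}$ assemble into a chain surjection $\CB_*(X;\C{A};Y)\to\CB_*(X;\C{A}|\C{B};Y)$; composing with the normalization that passes to $\C{A}/\C{B}$ in the interior factors kills the image of $\iota$, since any interior factor lying in $\C{B}$ becomes a unit after absorbing the $\C{B}$-action and hence a degenerate element. Thus the surjection descends to a natural chain map $\bar\pi\colon Q_*\to\CB_*(X;\C{A}|\C{B};Y)$, and the whole theorem comes down to showing that $\bar\pi$ is a quasi-isomorphism. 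This is precisely where r-flatness must enter, and I expect it to be the main obstacle.

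To prove that $\bar\pi$ is a quasi-isomorphism I would split $\C{A}=\C{B}\oplus(\C{A}/\C{B})$ as $\Bbbk$-vector spaces, expand each $\C{A}^{\otimes n}$ into the summands indexed by the set of interior slots occupied by $\C{A}/\C{B}$-factors, and filter $Q_*$ by the number of such factors, mirroring the filtration used in the proof of Proposition~\ref{Extension}. In the associated spectral sequence the slots occupied by $\C{B}$ contribute contractible bar-type subcomplexes built from $\C{B}$ acting on its neighbours; here the flatness of $\C{A}/\C{B}$ over $\C{B}$ — that is, the r-flatness hypothesis of Definition~\ref{RFlatExtension} — guarantees that tensoring these contractible $\C{B}$-pieces against the surviving $\C{A}/\C{B}$-factors remains exact, so the $E^1$-page collapses onto the normalized relative complex $X\otimes_\C{B}(\C{A}/\C{B})^{\otimes_\C{B}n}\otimes_\C{B}Y$, which computes $\Tor^{(\C{A}|\C{B})}_*(X,Y)$. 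Carrying out this collapse while keeping careful track of flatness is the delicate part of the argument; once it is done, transporting the resulting isomorphism $H_p(Q_*)\cong\Tor^{(\C{A}|\C{B})}_p(X,Y)$ into the long exact sequence above completes the proof.
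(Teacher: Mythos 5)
Your strategy is, at its core, the paper's own argument in different packaging. The paper filters $\CB_*(X;\C{A};Y)$ by the number of tensor factors equal to $\C{A}$, uses r-flatness to show each graded piece $L^p_*/L^{p-1}_*$ has homology concentrated in total degree $p$ and equal to $X\otimes_\C{B}(\C{A}/\C{B})^{\otimes_\C{B}p}\otimes_\C{B}Y$, identifies the $q=0$ edge of the $E^1$-page with the normalized relative bar complex, and then splices the two families of short exact sequences produced by the two-column $E^\infty$-page. Passing first to the cokernel $Q_*$ of $\iota$ and then filtering $Q_*$ by the number of $\C{A}/\C{B}$-factors is the identical computation restricted to the quotient; your version trades the splicing step for the long exact sequence of a short exact sequence of complexes, which is a legitimate and arguably cleaner way to organize the endgame. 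The key input is the same in both: flatness of $\C{A}/\C{B}$ over $\C{B}$ forces each iterated derived product $X\otimes^L_\C{B}(\C{A}/\C{B})\otimes^L_\C{B}\cdots\otimes^L_\C{B}Y$ to be concentrated in degree zero.

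There is one concrete point where your reduction, as stated, fails: $\bar\pi$ is not a quasi-isomorphism in degrees $0$ and $1$, so the claim that everything reduces to $H_p(Q_*)\cong\Tor^{(\C{A}|\C{B})}_p(X,Y)$ is not quite right. Indeed $Q_0=0$ while $\Tor^{(\C{A}|\C{B})}_0(X,Y)=X\otimes_\C{A}Y$; relatedly, the normalization does not kill the image of $\iota$ in degree $0$, where that image is all of $X\otimes_\C{B}Y$. In degree $1$ your own spectral sequence gives $H_1(Q_*)=\bigl(X\otimes_\C{B}(\C{A}/\C{B})\otimes_\C{B}Y\bigr)/\mathrm{im}(d_2)$, which is an extension of $\ker\bigl(X\otimes_\C{B}Y\to X\otimes_\C{A}Y\bigr)$ by $\Tor^{(\C{A}|\C{B})}_1(X,Y)$ rather than $\Tor^{(\C{A}|\C{B})}_1(X,Y)$ itself. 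This discrepancy is exactly what makes your long exact sequence terminate correctly as $\cdots\to\Tor^\C{B}_0(X,Y)\to\Tor^\C{A}_0(X,Y)\to 0$, whereas a literal reading of \eqref{LESHomology} at $p=0$ would force $\Tor^{(\C{A}|\C{B})}_0(X,Y)=0$; the paper's own splicing has the same degenerate tail. So your identification holds for $p\geq 2$, and the $p\leq 1$ portion of the sequence needs a short separate diagram chase (checking that the connecting map $H_1(Q_*)\to X\otimes_\C{B}Y$ vanishes on the subgroup $\Tor^{(\C{A}|\C{B})}_1(X,Y)$, which it does). With that repair, your proof goes through and agrees with the paper's.
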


\begin{proof}
  We consider the two-sided bar complex $\CB_*(X;\C{A};Y)$ for a
  right $\C{A}$-module $X$ and for a left $\C{A}$-module $Y$.  We 
  consider the following increasing filtration on $\CB_*(X;\C{A};Y)$
  \[ L^p_n = \sum_{n_0+\cdots+n_p = n-p} 
        X\otimes \C{B}^{\otimes n_0}\otimes \C{A}\otimes 
        \C{B}^{\otimes n_1}\otimes \cdots\otimes
        \C{A}\otimes \C{B}^{\otimes n_p}\otimes Y
  \] 
  for $p\leq n$ where we allow $n_i=0$.  We take $L^p_* = 0$ for $p<0$
  and let $L^p_n=L^{p+1}_n$ for $p\geq n$.  Note that the filtration
  degree comes from the number of tensor components which are equal to
  $\C{A}$.  We also see that $L^0_* = \CB_*(X;\C{B};Y)$ and $\colim_p
  L^p_* = \CB_*(X;\C{A};Y)$.  Then it is clear that $L^0_*/L^{-1}_* =
  L^0_* = \CB_*(X;\C{B};Y)$, and for $n\geq p$ we have
  \[ L^p_n/L^{p-1}_n \cong \bigoplus_{n_0+\cdots+n_p = n-p} 
       X\otimes\C{B}^{\otimes n_0}\otimes (\C{A}/\C{B})
       \otimes \C{B}^{\otimes n_1}\otimes\cdots\otimes 
       (\C{A}/\C{B})\otimes \C{B}^{\otimes n_p}\otimes Y
  \] 
  In the derived category, the quotient differential graded
  $\Bbbk$-module $L^p_*/L^{p-1}_*$ represents an $(p+2)$-fold product
  \[ X\otimes_\C{B}^L(\C{A}/\C{B})\otimes_\C{B}^L
  \cdots\otimes_\C{B}^L(\C{A}/\C{B}) \otimes_\C{B}^L Y \] where
  $\otimes_\C{B}^L$ denotes the left derived functor of
  $\otimes_\C{B}$.  We use the spectral sequence associated with this
  filtration employing the fact that $\C{B}\subseteq\C{A}$ is a r-flat
  extension and we see that the only non-zero groups are on the
  $p$-axis and on the $q$-axis
  \[ E^1_{p,q} = H_{p+q}(L^p_*/L^{p-1}_*) =
    \begin{cases}
    \Tor^\C{B}_q(X,Y) & \text{ if } p=0\\
    X\otimes_\C{B} \underbrace{(\C{A}/\C{B})\otimes_\C{B} \cdots\otimes_\C{B}
    (\C{A}/\C{B})}_\text{$p$-times}\otimes_\C{B} Y & \text{ if } q=0
    \end{cases}
  \]
  Now, we observe that the edge $q=0$ on the $E^1$-page is the
  normalized bar complex of $X$ and $Y$ over $\C{A}$ relative to
  $\C{B}$~\cite[Ch.VIII Thm.6.1 and Ch. IX
  Sec. 8-9]{MacLane:Homology}.  Then for $E^2_{p,q}$ we get
  \[ E^2_{p,q} = 
     \begin{cases}
       \Tor^\C{B}_q(X,Y) & \text{ if } p=0\\
       \Tor^{(\C{A}|\C{B})}_p(X,Y) & \text{ if } q=0
     \end{cases}
  \]
  For the subsequent pages in this spectral sequence, the only 
  relevant differentials are
  \[ d^p_{p,0}\colon \Tor^{(\C{A}|\C{B})}_p(X,Y)\to
    \Tor^\C{B}_{p-1}(X,Y) \] for $p\geq 2$, and the remaining terms
  for $p=0,1$ satisfy
  $\Tor^{(\C{A}|\C{B})}_p(X,Y) = E^2_{p,0} = E^\infty_{p,0}$.  Then
  for $p\geq 2$ we have short exact sequences of the form
  \begin{equation}\label{Part1}
    0\to E^\infty_{p,0}\to \Tor^{(\C{A}|\C{B})}_p(X,Y)\to \Tor^\C{B}_{p-1}(X,Y) \to
    E^\infty_{0,p-1} \to 0
  \end{equation}
  So, eventually in the $E^\infty$-page there remains only two classes 
  of non-zero groups: one on the $p$-axis the other on the $q$-axis.  
  The induced filtration 
  $L^* \Tor^\C{A}_*(X,Y)$ on the homology
  satisfies \[ E^\infty_{p,q} = L^p \Tor^\C{A}_{p+q}(X,Y)/L^{p-1}
  \Tor^\C{A}_{p+q}(X,Y)\] which can be placed into a diagram of the
  form
  \[\xymatrix{
    0\ \ar@{>->}[r] &
    L^0 \Tor^\C{A}_{p}(X,Y) \ \ar@{>->}[r] \ar@{->>}[d] & 
    L^1 \Tor^\C{A}_{p}(X,Y) \ \ar@{>->}[r] \ar@{->>}[d] & \cdots\ \ar@{>->}[r] &
    L^p \Tor^\C{A}_{p}(X,Y) \ \ar@{->>}[d] \\
    & E^\infty_{0,p} & E^\infty_{1,p-1} & & E^\infty_{p,0} 
  }\]
  where the top row is a sequence of injections with quotients given
  by the appropriate groups in $E^\infty$.  Since we only had non-zero
  groups on the $p$- and $q$-axes, we get short exact sequences of the
  form
  \begin{equation}\label{Part2}
    0 \to E^\infty_{0,p} \to \Tor^\C{A}_p(X,Y) \to E^\infty_{p,0}\to 0
  \end{equation}
  Splicing \eqref{Part1} and \eqref{Part2} we get a long exact
  sequence of the form~\eqref{LESHomology} for the range $p\geq 1$.
\end{proof}

\begin{corollary}
  Assume $\varphi\colon \C{B}\to \C{A}$ is a r-flat morphism such that
  $ker(\varphi)$ is an H-unital ideal of $\C{B}$.  Then there is a
  long exact sequence of the form~\eqref{LESHomology} for any right
  $\C{A}$-module $X$ and any left $\C{A}$-module $Y$.
\end{corollary}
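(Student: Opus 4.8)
The plan is to reduce to Theorem~\ref{JZHomology} by factoring $\varphi$ through its image. Set $\C{I}:=ker(\varphi)$ and $\bar{\C{B}}:=im(\varphi)$. Since $\varphi$ is a morphism of unital algebras we have $\varphi(1_\C{B})=1_\C{A}$, so $\bar{\C{B}}$ is a unital subalgebra of $\C{A}$ sharing its unit, and $\bar{\C{B}}\cong\C{B}/\C{I}$ as unital algebras. Throughout, the $\C{B}$-module structures on $X$, $Y$ and on $\C{A}$ are the ones induced by $\varphi$, and the relative groups $\Tor^{(\C{A}|\C{B})}_*(X,Y)$ are understood via the resulting $\C{B}$-bimodule structure on $\C{A}$. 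By Definition~\ref{RFlatExtension}, the hypothesis that $\varphi$ is r-flat says exactly that $\bar{\C{B}}\subseteq\C{A}$ is a r-flat extension, so Theorem~\ref{JZHomology} applies to this inclusion and yields a long exact sequence
\[ \cdots\to \Tor^{(\C{A}|\bar{\C{B}})}_{p+1}(X,Y)\to\Tor^{\bar{\C{B}}}_p(X,Y)\to \Tor^\C{A}_p(X,Y)\to \Tor^{(\C{A}|\bar{\C{B}})}_p(X,Y)\to\cdots \]

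It remains to rewrite the $\bar{\C{B}}$-terms in terms of $\C{B}$. First, since $\C{I}$ acts by zero on $X$ and $Y$, these are $\C{B}/\C{I}$-modules of opposite parity, and the H-unitality of $\C{I}$ lets me invoke Proposition~\ref{Extension} to obtain natural isomorphisms $\Tor^\C{B}_p(X,Y)\cong\Tor^{\C{B}/\C{I}}_p(X,Y)=\Tor^{\bar{\C{B}}}_p(X,Y)$ for all $p\geq 0$. Second, I would note that the relative torsion groups agree on the nose: every tensor factor appearing in $\CB_*(X;\C{A}|\C{B};Y)$---namely $X$, $Y$, and each copy of $\C{A}$ regarded as a $\C{B}$-bimodule---is acted on by $\C{I}$ through $\varphi$, hence trivially, and for modules on which $\C{I}$ acts by zero one has $M\otimes_\C{B}N=M\otimes_{\bar{\C{B}}}N$. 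As the differentials of both complexes involve only the multiplication of $\C{A}$ and the $\C{A}$-actions on $X$ and $Y$, it follows that $\CB_*(X;\C{A}|\C{B};Y)=\CB_*(X;\C{A}|\bar{\C{B}};Y)$ as differential graded $\Bbbk$-modules, whence $\Tor^{(\C{A}|\C{B})}_p(X,Y)=\Tor^{(\C{A}|\bar{\C{B}})}_p(X,Y)$.

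Substituting these two identifications into the long exact sequence above produces a sequence of exactly the form~\eqref{LESHomology}. The step needing the most care is checking that the identifications are compatible with the maps of the sequence. The isomorphisms of Proposition~\ref{Extension} are natural, and the equality of tensor products over $\C{B}$ and over $\bar{\C{B}}$ is induced by the identity on the underlying $\Bbbk$-modules; hence both identifications commute with the structural maps $\Tor^{\bar{\C{B}}}_p\to\Tor^\C{A}_p$ and $\Tor^\C{A}_p\to\Tor^{(\C{A}|\bar{\C{B}})}_p$ and with the connecting morphisms, so one may simply transport the exact sequence across them. This yields the desired long exact sequence.
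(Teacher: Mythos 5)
Your proposal is correct and follows essentially the same route as the paper: factor $\varphi$ through $im(\varphi)$, apply Theorem~\ref{JZHomology} to the r-flat inclusion $im(\varphi)\subseteq\C{A}$, identify $\Tor^{(\C{A}|\C{B})}$ with $\Tor^{(\C{A}|im(\varphi))}$ (the paper treats this as a matter of definition, whereas you verify it at the level of the relative bar complexes), and convert $\Tor^{im(\varphi)}_*$ to $\Tor^{\C{B}}_*$ via Proposition~\ref{Extension} using H-unitality of the kernel.
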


\begin{proof}
  There is an overload of notations here.
  $\Tor^{(\C{A}|im(\varphi))}$ refers to the relative Tor with
  $im(\varphi)$ is a subalgebra of $\C{A}$. On the other hand
  $\Tor^{(\C{A}|\C{B})}$ refers to relative Tor using $\varphi\colon
  \C{B}\to \C{A}$ which, by definition, is constructed using the image
  of $\varphi$. In other words, if $\C{A}$ is viewed as a
  $\C{B}$-module via a morphism $\varphi\colon \C{B}\to \C{A}$ then
  $\C{A}\otimes_\C{B} \C{A} := \C{A}\otimes_{im(\varphi)}\C{A}$, and
  therefore $\Tor^{(\C{A}|im(\varphi))}$ and $\Tor^{(\C{A}|\C{B})}$
  are one and the same.  Then since $\varphi$ is r-flat, we have a
  long exact sequence similar to \eqref{LESHomology} of the form
  \[ \cdots\to \Tor^{(\C{A}|\C{B})}_{p+1}(X,Y)\to
     \Tor^{im(\varphi)}_p(X,Y)\to\Tor^\C{A}_p(X,Y)\to 
     \Tor^{(\C{A}|\C{B})}_p(X,Y)\to\cdots   
  \]
  for $p\geq 1$.  The result follows since we have isomorphisms of the
  form $\Tor_*^\C{B}(X,Y)\cong \Tor_*^{im(\varphi)}(X,Y)$ by
  Proposition~\ref{Extension}.
 \end{proof}

\section{The Jacobi-Zariski Sequence for Extension Groups}\label{JZE}

For the time being, we assume $\C{A}$ is a unital $\Bbbk$-algebra
which is also {\em finite dimensional} over $\Bbbk$, and $\C{B}$ is an
arbitrary unital subalgebra of $\C{A}$. We will work with the category
of finitely generated $\C{A}$-modules. In the category of finitely
generated $\C{A}$-modules of either parity, every module is
necessarily finite dimensional over $\Bbbk$. For these categories the
$\Bbbk$-duality functor $\Hom_\Bbbk(\ \cdot\ ,\Bbbk)$ gives us an
equivalence of categories of the form \[ \Hom_\Bbbk(\ \cdot\
,\Bbbk)\colon \lmod{\C{A}}\to \rmod{\C{A}} \] where $\lmod{\C{A}}$ and
$\rmod{\C{A}}$ denote the categories of finitely generated left and
right $\C{A}$-modules, respectively. Using the adjunction between the
functors $\otimes_\C{A}$ and $\Hom_\Bbbk(\ \cdot\ ,\Bbbk)$ one can
easily prove the following lemma.

\begin{lemma}\label{FlatInjective}
  Assume $\C{A}$ is a finite dimensional $\Bbbk$-algebra and let $Y$
  be a finitely generated $\C{A}$-module.  Then $Y$ is
  $(\C{A},\C{B})$-flat if and only if the $\Bbbk$-dual
  $\Hom_\Bbbk(Y,\Bbbk)$ is an $(\C{A},\C{B})$-injective module.
\end{lemma}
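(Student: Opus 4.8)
The plan is to exploit the duality functor $\Hom_\Bbbk(\ \cdot\ ,\Bbbk)$ together with the standard tensor-hom adjunction to convert the defining lifting/exactness property of $(\C{A},\C{B})$-flatness into the corresponding property for $(\C{A},\C{B})$-injectivity. The key observation is that, because $\C{A}$ is finite dimensional and $Y$ is finitely generated (hence finite dimensional over $\Bbbk$), the duality $\Hom_\Bbbk(\ \cdot\ ,\Bbbk)$ is an exact, involutive contravariant equivalence between $\lmod{\C{A}}$ and $\rmod{\C{A}}$ that sends $(\C{A},\C{B})$-short exact sequences to $(\C{A},\C{B})$-short exact sequences (reversing the arrows). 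The reason dualizing preserves the \emph{relative} exactness is that $\Hom_\Bbbk(\ \cdot\ ,\Bbbk)$ carries $\C{B}$-split monomorphisms to $\C{B}$-split epimorphisms and vice versa, since a $\Bbbk$-linear splitting dualizes to a $\Bbbk$-linear splitting of the correct parity.

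First I would fix an $(\C{A},\C{B})$-short exact sequence $0\to X\to X'\to X''\to 0$ of $\C{A}$-modules of the parity opposite to $Y$, and apply the adjunction isomorphism
\[ \Hom_\C{A}\bigl(Z,\Hom_\Bbbk(Y,\Bbbk)\bigr)\cong
   \Hom_\Bbbk(Z\otimes_\C{A} Y,\Bbbk) \]
natural in $Z$. This is the crucial bridge: the left-hand side is what governs $(\C{A},\C{B})$-injectivity of $\Hom_\Bbbk(Y,\Bbbk)$, while the right-hand side is the $\Bbbk$-dual of the complex $X\otimes_\C{A} Y\to X'\otimes_\C{A} Y\to X''\otimes_\C{A} Y$ whose exactness defines $(\C{A},\C{B})$-flatness of $Y$. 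Since $\Bbbk$ is a field, $\Hom_\Bbbk(\ \cdot\ ,\Bbbk)$ is exact and faithful, so a sequence of $\Bbbk$-modules is exact if and only if its $\Bbbk$-dual is exact.

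Putting these together gives both directions at once. If $Y$ is $(\C{A},\C{B})$-flat, then the tensored sequence is exact, hence its $\Bbbk$-dual is exact, hence by the adjunction the sequence obtained by applying $\Hom_\C{A}(\ \cdot\ ,\Hom_\Bbbk(Y,\Bbbk))$ to the dualized $(\C{A},\C{B})$-short exact sequence is exact; since every $(\C{A},\C{B})$-short exact sequence arises (up to the duality) in this way, this is precisely $(\C{A},\C{B})$-injectivity of $\Hom_\Bbbk(Y,\Bbbk)$. The converse runs backwards through the same chain of equivalences, using that the duality is an equivalence so that testing against all $(\C{A},\C{B})$-epimorphisms of $\C{A}$-modules is the same as testing against all dualized $(\C{A},\C{B})$-monomorphisms. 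I expect the main obstacle to be purely bookkeeping rather than conceptual: one must track parities carefully (which side $Y$, $X$, and their duals live on), verify that the adjunction isomorphism is natural and that it is compatible with the $\C{B}$-splittings so that relative exactness is genuinely preserved, and confirm that finite-dimensionality is invoked exactly where it is needed, namely to guarantee that $\Hom_\Bbbk(\ \cdot\ ,\Bbbk)$ is an equivalence and that double-dualization recovers the original module. None of these steps is hard, but getting the variance and splitting conditions to line up is where an error could slip in.
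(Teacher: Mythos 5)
Your proposal is correct and follows essentially the same route as the paper: the tensor--hom adjunction $\Hom_\C{A}(Z,\Hom_\Bbbk(Y,\Bbbk))\cong\Hom_\Bbbk(Z\otimes_\C{A}Y,\Bbbk)$ combined with the exactness of $\Bbbk$-duality over a field. The only minor difference is in the converse, where you run the chain backwards by noting that $\Hom_\Bbbk(\ \cdot\ ,\Bbbk)$ reflects exactness, while the paper instead picks a pre-dual $T$ with $Y\cong\Hom_\Bbbk(T,\Bbbk)$ and shows $T$ is $(\C{A},\C{B})$-flat, relying on the duality equivalence for finitely generated modules; both versions are sound.
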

Note that the absolute case is covered by $\C{B}=\Bbbk$.
\begin{proof}
  Assume $Y$ is $(\C{A},\C{B})$-flat, that is every
  $(\C{A},\C{B})$-exact sequence of the form $0\to X\to X'\to X''\to
  0$ induce an exact sequence of $\Bbbk$-modules the form
  \[ 0\to X\otimes_\C{A} Y\to X'\otimes_\C{A} Y\to X''\otimes_\C{A}
  Y\to 0 \] Since $\Bbbk$ is a field, and therefore $\Hom_\Bbbk(\
  \cdot\ ,\Bbbk)$ is an exact functor, we get another exact sequence
  of $\Bbbk$-modules of the form
  \[ 0\to \Hom_\Bbbk(X\otimes_\C{A} Y,\Bbbk)\to
  \Hom_\Bbbk(X'\otimes_\C{A} Y,\Bbbk)\to \Hom_\Bbbk(X''\otimes_\C{A}
  Y,\Bbbk)\to 0 \] Using the adjunction between $\otimes_\C{A}$ and
  $\Hom_\Bbbk(\ \cdot\ ,\Bbbk )$ we get
  \[ 0\to \Hom_\C{A}(X,\Hom_\Bbbk (Y,\Bbbk ))\to
  \Hom_\C{A}(X',\Hom_\Bbbk (Y,\Bbbk ))\to \Hom_\C{A}(X'',\Hom_\Bbbk
  (Y,\Bbbk ))\to 0 \] is also exact.  This is equivalent to the fact
  that $\Hom_\Bbbk (Y,\Bbbk )$ is $(\C{A},\C{B})$-injective.
  Conversely, assume $Y$ is $(\C{A},\C{B})$-injective.  Since
  $\Hom_\Bbbk (\ \cdot\ ,\Bbbk )$ is an equivalence of finitely
  generated $\C{A}$-modules exchanging parity, there is a finitely
  generated $\C{A}$-module $T$ of the opposite parity of $Y$ such that
  $Y\cong \Hom_\Bbbk (T,\Bbbk )$.  Moreover, one can easily see that
  $\Hom_\Bbbk (Y,\Bbbk )\cong T$ for the same module since the modules
  we consider are all finite dimensional.  Assume now, without any
  loss of generality, that $Y$ is a right module, and therefore $T$ is
  left module.  Then
  \[ 0\to \Hom_\C{A}(X,\Hom_\Bbbk (T,\Bbbk ))\to \Hom_\C{A}(X',\Hom_\Bbbk (T,\Bbbk ))\to
  \Hom_\C{A}(X'',\Hom_\Bbbk (T,\Bbbk ))\to 0 \] is exact for every
  $(\C{A},\C{B})$-exact sequence $0\to X\to X'\to X''\to 0$ of
  finitely generated $\C{A}$-modules.  Using the adjunction again we
  see that
  \[ 0\to \Hom_\Bbbk (X\otimes_\C{A} T,\Bbbk )\to \Hom_\Bbbk
  (X'\otimes_\C{A} T,\Bbbk )\to\Hom_\Bbbk (X''\otimes_\C{A} T,\Bbbk
  )\to 0\] is also $(\C{A},\C{B})$-exact.  Since $\Hom_\Bbbk (\ \cdot\
  ,\Bbbk )$ is an exact equivalence for finite dimensional
  $\Bbbk$-modules we see that
  \[ 0\to X\otimes_\C{A} T\to X'\otimes_\C{A} T\to X''\otimes_\C{A}
  T\to 0\] is also $(\C{A},\C{B})$-exact.  This is equivalent to the
  fact that $T$ is $(\C{A},\C{B})$-flat.
\end{proof}

\begin{lemma}\label{JZCohomology}
  If $\C{A}$ is a finite dimensional $\Bbbk$-algebra and
  $\C{B}\subseteq \C{A}$ is a r-flat extension then for any finitely
  generated $\C{A}$-modules $X$ and $Y$ we have a long exact sequence
  in cohomology of the form
  \begin{equation}\label{LESCohomology}
    \cdots\to \Ext_{(\C{A}|\C{B})}^p(X,Y) \to \Ext_{\C{A}}^p(X,Y) \to
    \Ext_{\C{B}}^p(X,Y) \to \Ext_{(\C{A}|\C{B})}^{p+1}(X,Y) \to \cdots
  \end{equation}
  for every $p\geq 1$ where the first map
  $\Ext_{(\C{A}|\C{B})}^1(X,Y) \to \Ext_{\C{A}}^1(X,Y)$ in the
  sequence is a monomorphism.
\end{lemma}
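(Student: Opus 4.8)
The plan is to derive the cohomological sequence~\eqref{LESCohomology} from the homological sequence~\eqref{LESHomology} of Theorem~\ref{JZHomology} by applying the $\Bbbk$-duality functor $\Hom_\Bbbk(\ \cdot\ ,\Bbbk)$, which is exact because $\Bbbk$ is a field. The device that makes this work is a natural duality isomorphism relating each relevant extension group to the $\Bbbk$-dual of the corresponding torsion group. Concretely, for a right $\C{A}$-module $X$ and a left $\C{A}$-module $Y'$ I claim
\[ \Ext_\C{A}^p\bigl(X,\Hom_\Bbbk(Y',\Bbbk)\bigr)\cong\Hom_\Bbbk\bigl(\Tor^\C{A}_p(X,Y'),\Bbbk\bigr), \]
together with the evident analogues over $\C{B}$ and for the relative theory $(\C{A}|\C{B})$. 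Since $\C{A}$ is finite dimensional and $X$, $Y'$ are finitely generated, every module in sight is finite dimensional over $\Bbbk$, so double-dualization is harmless.

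To prove these isomorphisms I would resolve $X$ by the appropriate two-sided bar complex and feed it into the tensor--Hom adjunction recalled just before Lemma~\ref{FlatInjective}. Taking $P_\bullet=\CB_*(X;\C{A};\C{A})$, which simultaneously computes $\Tor^\C{A}_*(X,-)$ and $\Ext_\C{A}^*(X,-)$, the adjunction furnishes an isomorphism of cochain complexes $\Hom_\C{A}\bigl(P_\bullet,\Hom_\Bbbk(Y',\Bbbk)\bigr)\cong\Hom_\Bbbk\bigl(P_\bullet\otimes_\C{A}Y',\Bbbk\bigr)$. Because $\Hom_\Bbbk(\ \cdot\ ,\Bbbk)$ is exact it commutes with passage to (co)homology, and the right-hand side therefore computes $\Hom_\Bbbk\bigl(\Tor^\C{A}_p(X,Y'),\Bbbk\bigr)$, giving the stated isomorphism. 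Replacing $P_\bullet$ by $\CB_*(X;\C{B};\C{B})$ and by the relative bar complex $\CB_*(X;\C{A}|\C{B};\C{A})$---which, as recorded in the preliminaries, computes both the relative torsion and the relative extension groups---yields the $\C{B}$- and $(\C{A}|\C{B})$-versions by the identical computation.

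With the three duality isomorphisms in place I would dualize the long exact sequence~\eqref{LESHomology}. Exactness of $\Hom_\Bbbk(\ \cdot\ ,\Bbbk)$ turns it into a long exact sequence with all arrows reversed, and the isomorphisms above rewrite the four-term pattern $\Tor^{(\C{A}|\C{B})}_{p+1}\to\Tor^\C{B}_p\to\Tor^\C{A}_p\to\Tor^{(\C{A}|\C{B})}_p$ as $\Ext_{(\C{A}|\C{B})}^p\to\Ext_\C{A}^p\to\Ext_\C{B}^p\to\Ext_{(\C{A}|\C{B})}^{p+1}$, which is exactly~\eqref{LESCohomology}, with $Y=\Hom_\Bbbk(Y',\Bbbk)$. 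Since $\Hom_\Bbbk(\ \cdot\ ,\Bbbk)$ is a parity-reversing equivalence on finitely generated $\C{A}$-modules, every such $Y$ arises as $\Hom_\Bbbk(Y',\Bbbk)$ for $Y'=\Hom_\Bbbk(Y,\Bbbk)$, so the sequence holds for an arbitrary finitely generated $Y$; applying the same argument to the opposite-parity form of Theorem~\ref{JZHomology} disposes of the other parity.

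The point demanding the most attention is naturality: one must check that the duality isomorphisms are natural in the resolution, so that the reversed connecting maps and change-of-rings maps obtained by dualizing~\eqref{LESHomology} actually coincide with the structural maps of the Ext sequence rather than merely matching it up to isomorphism. The relative term is the delicate case, since it rests on a single relative bar resolution computing $\Tor^{(\C{A}|\C{B})}$ and $\Ext_{(\C{A}|\C{B})}$ compatibly with the adjunction; this is exactly where finite dimensionality of $\C{A}$ and finite generation of $X$ and $Y$ are used, both to invoke the parity-reversing equivalence and to keep the duality an honest inverse.
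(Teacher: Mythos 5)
Your proposal is correct and follows essentially the same route as the paper: both establish the duality isomorphism $\Ext^p_{(\C{U}|\C{V})}(X,\Hom_\Bbbk(T,\Bbbk))\cong\Hom_\Bbbk(\Tor^{(\C{U}|\C{V})}_p(X,T),\Bbbk)$ via the bar resolution and the tensor--Hom adjunction, then apply the exact functor $\Hom_\Bbbk(\ \cdot\ ,\Bbbk)$ to the long exact sequence~\eqref{LESHomology}. The only cosmetic difference is that the paper resolves the pre-dual $T$ and dualizes to an injective resolution of $Y$ (invoking Lemma~\ref{FlatInjective}), whereas you resolve $X$; the adjunction makes these computations identical.
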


\begin{proof}
  Assume $X$ and $Y$ are $\C{A}$-modules.  Let $\C{U}$ and $\C{V}$ be
  a unital $\Bbbk$-subalgebras of $\C{A}$ such that
  $\C{V}\subseteq\C{U}$.  Fix a pre-dual $T$ of $Y$, i.e. $\Hom_\Bbbk
  (T,\Bbbk )\cong Y$.  Now, observe that if $T$ is left $\C{A}$-module
  then $\CB_*(\C{U};\C{U}|\C{V};T)$ is a $(\C{U},\C{V})$-projective
  resolution of $T$, and therefore $\Hom_\Bbbk
  (\CB_*(\C{U};\C{U}|\C{V};T),\Bbbk )$ is an injective resolution of
  $Y \cong \Hom_\Bbbk (T,\Bbbk )$ by Lemma~\ref{FlatInjective}.  Then
  \begin{align*}
    \Ext_{(\C{U}|\C{V})}^n(X,Y) 
    \cong & \Ext_{(\C{U}|\C{V})}^n(X,\Hom_\Bbbk (T,\Bbbk )) \\
    = & H_n \Hom_\C{U}(X,\Hom_\Bbbk (\CB_*(\C{U};\C{U}|\C{V};T),\Bbbk ))\\
    \cong & H_n \Hom_\Bbbk (\CB_*(X;\C{U}|\C{V};T),\Bbbk ) \\
    \cong & \Hom_\Bbbk (\Tor^{(\C{U}|\C{V})}_n(X,T),\Bbbk )
  \end{align*}
  We consider three cases now: (i) $\C{U}=\C{A}$ and $\C{V}=\Bbbk$,
  (ii) $\C{U}=\C{B}$ and $\C{V}=\Bbbk$ and finally (iii) $\C{U}=\C{A}$
  and $\C{V}=\C{B}$.  Then we see that if we apply $\Hom_\Bbbk (\
  \cdot\ ,\Bbbk )$ to the long exact sequence~\eqref{LESHomology} we
  obtain the result we would like to prove.
\end{proof}

\begin{remark}\label{FiniteExtension}
  Note that in the proof of Lemma~\ref{FlatInjective} where we show
  $\Hom_\Bbbk (Y,\Bbbk )$ is $(\C{A},\C{B})$-injective when it is
  $(\C{A},\C{B})$-flat, we did not use the fact that $Y$ is finitely
  generated over a finite dimensional $\Bbbk$-algebra $\C{A}$.
  Indeed, it is true that for an arbitrary $\C{A}$-module over any
  $\Bbbk$-algebra $\C{A}$ (regardless of $\Bbbk$-dimension) and for an
  arbitrary unital subalgebra $\C{B}$ the module $\Hom_\Bbbk (Y,\Bbbk
  )$ is still $(\C{A},\C{B})$-injective when $Y$ is
  $(\C{A},\C{B})$-flat.  Then the result of Lemma~\ref{JZCohomology}
  can be extended to not necessarily finite dimensional algebras, if
  instead of the full category of finitely generated $\C{A}$-modules
  we consider the category of $\C{A}$-modules which are {\em finite
    dimensional over the base field} $\Bbbk$.  The result we proved in
  Lemma~\ref{FlatInjective} in the direction we need is still true and
  in the category of finite dimensional $\C{A}$-modules, the duality
  functor is still an equivalence.  These are enough to prove an
  analogue of Lemma~\ref{JZCohomology} in the case $\C{A}$ is not
  necessarily finite dimensional over $\Bbbk$.
\end{remark}

\begin{definition}
  Assume $\C{A}$ is an arbitrary $\Bbbk$-algebra, and we make no
  assumption on the $\Bbbk$-dimension of $\C{A}$.  An $\C{A}$-module
  $M$ is called an {\em approximately finite dimensional}
  $\C{A}$-module if $M$ is a colimit of all of its $\C{A}$-submodules
  of finite $\Bbbk$-dimension.
\end{definition}

\begin{theorem}\label{RealJZCohomology}
  Let $\C{A}$ be an arbitrary unital $\Bbbk$-algebra where we make no
  assumption on the $\Bbbk$-dimension of $\C{A}$.  Let $\C{B}\subseteq
  \C{A}$ be a r-flat extension and assume $Y$ is an approximately
  finite dimensional $\C{A}$-module.  Then for every $\C{A}$-module
  $X$ we have a long exact sequence of the form~\eqref{LESCohomology}.
\end{theorem}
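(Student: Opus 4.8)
The plan is to reduce to the finite dimensional situation already settled in Lemma~\ref{JZCohomology} and Remark~\ref{FiniteExtension}, and then to pass to the colimit that defines an approximately finite dimensional module. Since $Y$ is approximately finite dimensional, write $Y=\colim_{i} Y_i$ where $\{Y_i\}_{i\in\Lambda}$ runs over all finite dimensional $\C{A}$-submodules of $Y$ ordered by inclusion; this system is filtered because the sum of two finite dimensional submodules is again finite dimensional. Each $Y_i$ has a pre-dual $T_i:=\Hom_\Bbbk(Y_i,\Bbbk)$, so by Remark~\ref{FiniteExtension} the long exact sequence~\eqref{LESCohomology} holds for the pair $(X,Y_i)$ and our arbitrary $\C{A}$-module $X$: indeed it is obtained by applying the exact functor $\Hom_\Bbbk(\ \cdot\ ,\Bbbk)$ to the homology sequence~\eqref{LESHomology} for $(X,T_i)$ provided by Theorem~\ref{JZHomology}, exactly as in the proof of Lemma~\ref{JZCohomology}.

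Next I would use that~\eqref{LESCohomology} is natural in its coefficient module: an inclusion $Y_i\hookrightarrow Y_j$ induces a morphism of long exact sequences, so the family of sequences~\eqref{LESCohomology} for the pairs $(X,Y_i)$ forms a filtered system of long exact sequences of $\Bbbk$-modules. Because filtered colimits are exact in $\lmod{\Bbbk}$ and commute with homology, passing to $\colim_i$ yields an exact sequence
\[ \cdots\to\colim_i\Ext^p_{(\C{A}|\C{B})}(X,Y_i)\to\colim_i\Ext^p_\C{A}(X,Y_i)\to\colim_i\Ext^p_\C{B}(X,Y_i)\to\colim_i\Ext^{p+1}_{(\C{A}|\C{B})}(X,Y_i)\to\cdots \]
Thus everything reduces to the natural identifications $\colim_i\Ext^p_\C{U}(X,Y_i)\cong\Ext^p_\C{U}(X,Y)$ for $\C{U}\in\{\C{A},\C{B}\}$ together with the relative analogue $\colim_i\Ext^p_{(\C{A}|\C{B})}(X,Y_i)\cong\Ext^p_{(\C{A}|\C{B})}(X,Y)$, all compatible with the structure maps of~\eqref{LESCohomology}.

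The main obstacle is exactly this last identification, that is, showing that the $\Ext$-functors in question commute with the filtered colimit $Y=\colim_i Y_i$. This is delicate precisely because $\Ext^p_\C{U}(X,\ \cdot\ )$ need not commute with filtered colimits of coefficients when the first argument $X$ fails to be finitely presented in the relevant degrees, and here $X$ is unrestricted. I would attack it with the same duality used in Lemma~\ref{JZCohomology}: computing the relative $\Ext$-group as the cohomology of $\Hom_\C{U}(\CB_*(X;\C{U}|\C{V};\C{U}),Y_i)\cong\Hom_\Bbbk(\CB_*(X;\C{U}|\C{V};T_i),\Bbbk)$ and invoking the exactness of $\Hom_\Bbbk(\ \cdot\ ,\Bbbk)$ together with $Y_i\cong\Hom_\Bbbk(T_i,\Bbbk)$, the identification reduces, for each of the three pairs $(\C{U},\C{V})\in\{(\C{A},\Bbbk),(\C{B},\Bbbk),(\C{A},\C{B})\}$ of Lemma~\ref{JZCohomology}, to proving that the canonical inclusion of cochain complexes
\[ \colim_i\Hom_\C{U}\!\big(\CB_*(X;\C{U}|\C{V};\C{U}),Y_i\big)\hookrightarrow\Hom_\C{U}\!\big(\CB_*(X;\C{U}|\C{V};\C{U}),Y\big) \]
is a quasi-isomorphism. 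Here the approximate finite dimensionality of $Y$ is exactly what must be exploited: since every finitely generated $\C{A}$-submodule of $Y$ is finite dimensional and hence contained in some $Y_i$, the point to verify is that every cohomology class on the right can be represented by a cocycle whose image lies in a finite dimensional submodule, so that it is already detected on the left. Granting this quasi-isomorphism, naturality makes the resulting isomorphisms compatible with the connecting homomorphisms, and substituting them into the $\colim_i$-sequence above produces the long exact sequence~\eqref{LESCohomology} for the approximately finite dimensional coefficient module $Y$, completing the proof.
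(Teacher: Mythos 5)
Your proposal follows the same route as the paper's own proof: establish the long exact sequence~\eqref{LESCohomology} for every finite dimensional $\C{A}$-submodule $Z\subseteq Y$ via Remark~\ref{FiniteExtension} (duality against a pre-dual and Theorem~\ref{JZHomology}), then pass to the filtered colimit over all such submodules using exactness of filtered colimits of $\Bbbk$-modules. Up to that point the two arguments coincide, and your observation that the system of finite dimensional submodules is filtered and that the sequences are natural in the coefficient module is correct.

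Where you go beyond the paper is in isolating the genuine crux: the identification $\colim_i\Ext^p_{\C{U}}(X,Y_i)\cong\Ext^p_{\C{U}}(X,Y)$ and its relative analogue. The paper's proof does not address this at all --- it only invokes exactness of colimits, which by itself produces an exact sequence of the colimit groups rather than of the groups $\Ext^p(X,Y)$ appearing in~\eqref{LESCohomology}. You are right that this is the delicate point: since the terms $\CB_n(X;\C{U}|\C{V};\C{U})$ of the bar resolution are not finitely generated over $\C{U}$ when $X$ is unrestricted, the canonical map $\colim_i\Hom_\C{U}(\CB_*(X;\C{U}|\C{V};\C{U}),Y_i)\to\Hom_\C{U}(\CB_*(X;\C{U}|\C{V};\C{U}),Y)$ is injective but need not be surjective, and injectivity alone does not give a quasi-isomorphism. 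Your argument stops at ``granting this quasi-isomorphism,'' so as written it has an unclosed step; but it is precisely the step the published proof silently elides, and your explicit flagging of it is the more honest account of where the difficulty lies. Closing it would require either an additional hypothesis on $X$ (for instance a resolution by finitely generated $(\C{A},\C{B})$-projectives in the relevant degrees) or an argument exploiting the approximate finite dimensionality of $Y$ more substantively than either you or the paper does.
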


\begin{proof}
  We know by Remark~\ref{FiniteExtension} that we have a long exact
  sequence of the form
  \[  \cdots\to \Ext_{(\C{A}|\C{B})}^p(X,Z) \to \Ext_{\C{A}}^p(X,Z) \to
      \Ext_{\C{B}}^p(X,Z) \to \Ext_{(\C{A}|\C{B})}^{p+1}(X,Z) \to \cdots
  \]
  for every submodule $Z$ of $Y$ of finite $\Bbbk$-dimension for every
  $p\geq 1$.  Then we use the fact $Y$ is the colimit of all of its
  $\C{A}$-submodules of finite $\Bbbk$-dimension and that colimit is
  an exact functor.
\end{proof}

\begin{corollary}
  Assume $\varphi\colon \C{B}\to \C{A}$ is a r-flat morphism of unital
  algebras such that $ker(\varphi)$ is an H-unital ideal of
  $\C{B}$. Assume $Y$ is an approximately finite dimensional
  $\C{A}$-module.  Then for every $\C{A}$-module $X$ we have a long
  exact sequence of the form~\eqref{LESCohomology}.
\end{corollary}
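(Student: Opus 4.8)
The plan is to replay the proof of the homological corollary to Theorem~\ref{JZHomology}, substituting Theorem~\ref{RealJZCohomology} for Theorem~\ref{JZHomology} and the $\Bbbk$-dual of Proposition~\ref{Extension} for Proposition~\ref{Extension} itself. The first step is to dispose of the same notational overload that appears in the homological case: the relative extension groups $\Ext_{(\C{A}|\C{B})}^*$ associated with the morphism $\varphi\colon\C{B}\to\C{A}$ are by definition built from $\C{A}\otimes_\C{B}\C{A}:=\C{A}\otimes_{im(\varphi)}\C{A}$, so that $\Ext_{(\C{A}|\C{B})}^*$ and $\Ext_{(\C{A}|im(\varphi))}^*$ are literally the same groups.

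Next, since $\varphi$ is r-flat the inclusion $im(\varphi)\subseteq\C{A}$ is a r-flat extension, and $Y$ is approximately finite dimensional by hypothesis. I would therefore apply Theorem~\ref{RealJZCohomology} with $im(\varphi)$ playing the role of the subalgebra, which already produces a long exact sequence
\[ \cdots\to\Ext_{(\C{A}|im(\varphi))}^p(X,Y)\to\Ext_\C{A}^p(X,Y)\to\Ext_{im(\varphi)}^p(X,Y)\to\Ext_{(\C{A}|im(\varphi))}^{p+1}(X,Y)\to\cdots \]
for every $\C{A}$-module $X$. Combined with the identification of the relative groups above, the entire statement reduces to producing a natural isomorphism $\Ext_{im(\varphi)}^*(X,Y)\cong\Ext_\C{B}^*(X,Y)$.

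To establish this isomorphism I would transport Proposition~\ref{Extension} through $\Bbbk$-duality, exactly as cohomology was extracted from homology in the proof of Lemma~\ref{JZCohomology}. Set $\C{I}:=ker(\varphi)$, which is H-unital, and note $im(\varphi)\cong\C{B}/\C{I}$. First reduce to a finite-dimensional $\C{A}$-submodule $Z\subseteq Y$; it is automatically an $im(\varphi)$-submodule, and its pre-dual $T$ (with $\Hom_\Bbbk(T,\Bbbk)\cong Z$) is an $im(\varphi)$-module, hence a $\C{B}$-module on which $\C{I}$ acts by zero. The chain of isomorphisms from the proof of Lemma~\ref{JZCohomology}, specialized to the absolute case $\C{V}=\Bbbk$, gives $\Ext_\C{U}^n(X,Z)\cong\Hom_\Bbbk(\Tor^\C{U}_n(X,T),\Bbbk)$ for $\C{U}=\C{B}$ and for $\C{U}=im(\varphi)=\C{B}/\C{I}$. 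Proposition~\ref{Extension} identifies $\Tor^\C{B}_n(X,T)\cong\Tor^{\C{B}/\C{I}}_n(X,T)$, and applying the exact functor $\Hom_\Bbbk(\ \cdot\ ,\Bbbk)$ yields $\Ext_\C{B}^n(X,Z)\cong\Ext_{im(\varphi)}^n(X,Z)$. Passing to the filtered colimit over the finite-dimensional submodules $Z\subseteq Y$, exactly as in the proof of Theorem~\ref{RealJZCohomology}, upgrades this to $\Ext_\C{B}^n(X,Y)\cong\Ext_{im(\varphi)}^n(X,Y)$, and substituting it into the sequence above produces~\eqref{LESCohomology}.

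The step I expect to demand the most care is this last identification $\Ext_{im(\varphi)}^*\cong\Ext_\C{B}^*$. In the homological corollary the analogous $\Tor$ isomorphism is Proposition~\ref{Extension} verbatim, but the Remark following Proposition~\ref{Extension} cautions that the naive $\Ext$-analogue holds in general only under finiteness hypotheses. The approximately-finite-dimensional assumption on $Y$ is precisely what lets me bypass that obstruction, by dualizing on finite-dimensional pieces $Z$ where Proposition~\ref{Extension} applies cleanly and then reassembling by a filtered colimit; I would check that the duality isomorphisms are compatible with the transition maps of this colimit, so that exactness of filtered colimits carries the finite-dimensional statements over to $Y$.
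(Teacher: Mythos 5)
Your argument is correct and is precisely the intended one: the paper states this corollary without proof, and the evident dualization of the proof of the corollary to Theorem~\ref{JZHomology} --- apply Theorem~\ref{RealJZCohomology} to the r-flat extension $im(\varphi)\subseteq\C{A}$, identify $\Ext^*_{(\C{A}|\C{B})}$ with $\Ext^*_{(\C{A}|im(\varphi))}$ by the same notational convention, and then obtain $\Ext^*_{im(\varphi)}(X,Y)\cong\Ext^*_{\C{B}}(X,Y)$ by dualizing Proposition~\ref{Extension} on finite-dimensional submodules $Z\subseteq Y$ and passing to the colimit --- is exactly what you carried out. Your caution about the final colimit step is sensible, but you are relying only on the same commutation of $\Ext$ with the filtered colimit of finite-dimensional submodules that the paper itself invokes in the proof of Theorem~\ref{RealJZCohomology}, so no new gap is introduced.
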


\section{Jacobi-Zariski Sequence for Hochschild Homology and Cyclic (Co)Homology}\label{MainResult}

In this section we assume $\C{A}$ is a unital $\Bbbk$-algebra and
$\C{B}\subseteq \C{A}$ is a r-flat extension.  We make no assumption
on the $\Bbbk$-dimension of $\C{A}$.  Our aim in this section is to
repeat the argument we gave in Section~\ref{JZT} and Section~\ref{JZE}
for the Hochschild homology and cyclic (co)homology, and prove
appropriate versions of Theorem~\ref{JZHomology} and
Theorem~\ref{RealJZCohomology}.  Even though the filtrations are
similar, the associated graded complexes will be different.  So, we
need to check the details carefully.

Now we define an increasing filtration on the Hochschild chain complex
by letting
\begin{equation}\label{JZHFiltration}
 G^p_n = \sum_{n_0+\cdots+n_p = n-p} M\otimes
 \C{B}^{\otimes n_0}\otimes \C{A}\otimes \C{B}^{\otimes n_1}\otimes\cdots\otimes
 \C{A}\otimes \C{B}^{\otimes n_p}
\end{equation}
for $0\leq p\leq n$. We let $G^p_n=0$ for $p<0\leq n$ and $G^p_n =
G^{p+1}_n$ for all $p\geq n\geq 0$.  Here again, the filtration degree
counts the number of tensor components which are equal to $\C{A}$ and
observe that $G^0_* = \CH_*(\C{B},M)$.  Moreover, we see that
$\colim_p G^p_* = \CH_*(\C{A},M)$.  The associated graded complex is
then given by
\[ G^p_n/G^{p-1}_n \cong \bigoplus_{n_0+\cdots+n_p = n-p} M\otimes
\C{B}^{\otimes n_0}\otimes (\C{A}/\C{B})\otimes \C{B}^{\otimes
  n_1}\otimes\cdots\otimes (\C{A}/\C{B})\otimes \C{B}^{\otimes n_p} \]
for any $n\geq p\geq 1$, and at degree $0$ we see $G^0_*/G^{-1}_* =
G^0_* = \CH_*(\C{B},M)$.  But recall that $\C{A}/\C{B}$ is flat over
$\C{B}$ since $\C{B}\subseteq\C{A}$ is a r-flat extension.  Then on
the $E^1$-page of the associated spectral sequence we get only two
non-zero groups: one on the $p$-axis and the other on the $q$-axis
\begin{equation*}
  E^1_{p,q} = H_{p+q}(G^p_*/G^{p-1}_*) = 
  \begin{cases}
    HH_q(\C{B},M) & \text{ if } p=0\\
    \widetilde{\CH}_p(\C{A}|\C{B},M) & \text{ if } q=0 \text{ and } p>0
  \end{cases}
\end{equation*}
where $\widetilde{\CH}_*(\C{A}|\C{B},M)$ is the {\em normalized}
relative Hochschild complex~\cite[1.1.14]{Loday:CyclicHomology}.  This leads us
to the $E^2$-page
\begin{equation*}
  E^2_{p,q} = 
  \begin{cases}
    HH_q(\C{B},M) & \text{ if } p=0 \text{ and } q>0\\
    HH_p(\C{A}|\C{B},M) & \text{ if } q=0
  \end{cases}
\end{equation*}
The rest of the argument is similar to the argument we gave in
Section~\ref{JZT}.

\begin{theorem}\label{JacobiZariskiHochschild}
  Assume $\C{B}\subseteq \C{A}$ is a r-flat extension of unital,
  associative but not necessarily commutative $\Bbbk$-algebras.  Then
  for any $\C{A}$-bimodule $M$ we have a long exact sequence of the
  form
  \[ \cdots\to HH_{p+1}(\C{A}|\C{B},M)\to HH_p(\C{B},M)\to
    HH_p(\C{A},M)\to HH_p(\C{A}|\C{B},M)\to \cdots\] for $p\geq 1$
  where the last map $ HH_1(\C{A},M)\to HH_1(\C{A}|\C{B},M)$ in the
  sequence is an epimorphism.  If we assume $M$ is approximately
  finite dimensional then we obtain have a long exact sequence in
  cohomology of the form
  \[ \cdots \to HH^p(\C{A}|\C{B},M)\to HH^p(\C{A},M)\to
    HH^p(\C{B},M)\to HH^{p+1}(\C{A}|\C{B},M)\to\cdots \] for every
  $p\geq 1$ where the first map $HH^1(\C{A}|\C{B},M)\to HH^1(\C{A},M)$
  in the sequence is a monomorphism.
\end{theorem}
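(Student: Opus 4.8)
The plan is to run the spectral sequence already assembled before the statement and then derive the cohomology sequence from the homology one by $\Bbbk$-duality, exactly mirroring the passage from Section~\ref{JZT} to Section~\ref{JZE}. For homology the essential work is done: the filtration $G^p_*$ of \eqref{JZHFiltration} satisfies $G^0_*=\CH_*(\C{B},M)$ and $\colim_p G^p_*=\CH_*(\C{A},M)$, and r-flatness of $\C{B}\subseteq\C{A}$ collapses the associated spectral sequence onto the two axes, with $E^2_{0,q}=HH_q(\C{B},M)$ for $q>0$ and $E^2_{p,0}=HH_p(\C{A}|\C{B},M)$. From here I would copy the endgame of Theorem~\ref{JZHomology} almost verbatim. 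Since the only classes sit on the $p$- and $q$-axes, the sole surviving higher differentials are $d^p\colon HH_p(\C{A}|\C{B},M)\to HH_{p-1}(\C{B},M)$, producing four-term exact sequences
\[ 0\to E^\infty_{p,0}\to HH_p(\C{A}|\C{B},M)\to HH_{p-1}(\C{B},M)\to E^\infty_{0,p-1}\to 0 \]
in the role of \eqref{Part1}, while the two-column $E^\infty$-page endows the abutment with a length-two filtration, giving
\[ 0\to E^\infty_{0,p}\to HH_p(\C{A},M)\to E^\infty_{p,0}\to 0 \]
in the role of \eqref{Part2}. Splicing the two families exactly as before yields the homology sequence, with connecting map $d^p$ and with $HH_p(\C{B},M)\to HH_p(\C{A},M)$ the edge map $E^2_{0,p}\twoheadrightarrow E^\infty_{0,p}\hookrightarrow HH_p(\C{A},M)$ induced by $\C{B}\hookrightarrow\C{A}$.

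For cohomology I would reduce to homology by duality. Since $HH_*(\C{A},M)=\Tor^{\C{A}^e}_*(\C{A},M)$ and the relative theory is relative $\Tor$ over $\C{A}^e$ relative to $\C{B}^e$, the computation of Lemma~\ref{JZCohomology} applies with $\C{U}=\C{A}^e$, $\C{V}=\C{B}^e$ and first argument $\C{A}$. When $M$ is finite dimensional over $\Bbbk$, set $T=\Hom_\Bbbk(M,\Bbbk)$, so that $M\cong\Hom_\Bbbk(T,\Bbbk)$; then the flat-to-injective principle of Remark~\ref{FiniteExtension} together with the $\otimes_\Bbbk$–$\Hom_\Bbbk$ adjunction gives natural isomorphisms $HH^n(\C{A},M)\cong\Hom_\Bbbk(HH_n(\C{A},T),\Bbbk)$ and $HH^n(\C{A}|\C{B},M)\cong\Hom_\Bbbk(HH_n(\C{A}|\C{B},T),\Bbbk)$, and likewise over $\C{B}$. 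Because $\Bbbk$ is a field, $\Hom_\Bbbk(\ \cdot\ ,\Bbbk)$ is exact and contravariant; applying it to the homology sequence with coefficient $T$ and reindexing produces precisely the stated cohomology sequence for $M$.

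To drop the finiteness restriction I would imitate Theorem~\ref{RealJZCohomology}: write the approximately finite dimensional bimodule $M$ as the filtered colimit $\colim_i M_i$ of its finite dimensional sub-bimodules, apply the previous paragraph to each $M_i$, and pass to the colimit, using that filtered colimits of $\Bbbk$-modules are exact. This last step is where I expect the real difficulty to lie. In contrast to homology, Hochschild cohomology does not in general commute with filtered colimits in the coefficient variable once $\C{A}$ is infinite dimensional, since $\Hom_\Bbbk(\C{A}^{\otimes n},-)$ does not; consequently the identification of $\colim_i HH^\bullet(\C{A},M_i)$ with $HH^\bullet(\C{A},M)$ is exactly the point at which the hypothesis that $M$ be \emph{approximately} finite dimensional becomes indispensable, and it must be justified with the same care as in Theorem~\ref{RealJZCohomology} rather than taken for granted.
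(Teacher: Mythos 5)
Your proposal matches the paper's proof: the paper establishes the homology sequence by exactly this spectral-sequence argument (the filtration $G^p_*$ of \eqref{JZHFiltration}, collapse onto the two axes via r-flatness, and the splicing of \eqref{Part1}-type and \eqref{Part2}-type sequences as in Theorem~\ref{JZHomology}), and obtains the cohomology sequence by the same duality-plus-colimit reduction through Lemma~\ref{JZCohomology}, Remark~\ref{FiniteExtension} and Theorem~\ref{RealJZCohomology}. Your closing caveat about commuting $HH^\bullet(\C{A},\ \cdot\ )$ with filtered colimits is a fair one, but it concerns a step the paper itself passes over just as tersely, so it does not mark a divergence from the paper's route.
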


Now we are ready to extend this result to cyclic (co)homology.
\begin{theorem}\label{Theorem:JZW}
  Assume $\C{A}$, $\C{B}$ and $\Bbbk$ are as before. Then we have the
  following long exact sequences:
  \begin{align}
    \cdots\to HH_{p+1}(\C{A}|\C{B})\to HH_p(\C{B})\to HH_p(\C{A})\to HH_p(\C{A}|\C{B})\to \cdots\label{JZPureHochschild}\\
    \cdots\to HC_{p+1}(\C{A}|\C{B})\to HC_p(\C{B})\to HC_p(\C{A})\to HC_p(\C{A}|\C{B})\to \cdots\\
    \cdots\to HC^p(\C{A}|\C{B})\to HC^p(\C{A})\to HC^p(\C{B})\to
    HC^{p+1}(\C{A}|\C{B})\to \cdots
  \end{align}
  for $p\geq 1$.
\end{theorem}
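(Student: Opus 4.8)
The plan is to upgrade the spectral-sequence argument preceding Theorem~\ref{JacobiZariskiHochschild} from the level of chain complexes to the level of \emph{mixed} complexes, and then to let Connes' periodicity machinery carry the Hochschild statement over to cyclic (co)homology. For the first sequence \eqref{JZPureHochschild} I would run the same filtration argument, but on the genuine (cyclic) Hochschild complex $\CH_*(\C{A})=\bigoplus_n \C{A}^{\otimes n+1}$ with coefficients in $\C{A}$ itself, filtering by the number of tensor slots allowed to range over all of $\C{A}$ rather than over $\C{B}$ --- now counting all $n+1$ slots symmetrically, so that $G^0_*=\CH_*(\C{B})$ recovers the \emph{absolute} complex of $\C{B}$ and not merely $\CH_*(\C{B},\C{A})$. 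Exactly as before, r-flatness forces the associated graded onto two axes, with $HH_*(\C{B})$ along $p=0$ and the normalized relative complex $\widetilde{\CH}_*(\C{A}|\C{B})$ along $q=0$, and splicing yields \eqref{JZPureHochschild}. Equivalently, since $\C{B}\hookrightarrow\C{A}$ is injective and $\otimes$ over a field is exact, I have a short exact sequence of complexes $0\to\CH_*(\C{B})\to\CH_*(\C{A})\to Q_*\to 0$ whose cokernel $Q_*$ the spectral sequence identifies, through r-flatness, with the relative complex: the natural projection $Q_*\to\widetilde{\CH}_*(\C{A}|\C{B})$ is a quasi-isomorphism for the Hochschild differential $b$.

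The observation that unlocks cyclic homology is that this entire filtration is stable under Connes' boundary operator $B$. Indeed $B$ acts by inserting a unit and cyclically permuting the tensor factors; inserting $1\in\C{B}$ adds a $\C{B}$-slot, and the cyclic permutation merely reorders the factors, so neither operation raises the number of $\C{A}$-slots. Hence $B(G^p_*)\subseteq G^p_*$, each $G^p_*$ is a sub-mixed-complex of $(\CH_*(\C{A}),b,B)$, and the short exact sequence above is a short exact sequence of mixed complexes. Moreover the projection $\CH_*(\C{A})\to\widetilde{\CH}_*(\C{A}|\C{B})$ is defined by the very formulas giving $b$ and $B$, so it too is a morphism of mixed complexes; consequently $Q_*\to\widetilde{\CH}_*(\C{A}|\C{B})$ is a morphism of mixed complexes which, by the previous paragraph, is a quasi-isomorphism on Hochschild homology.

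From here two standard facts about mixed complexes over a field finish the argument. First, the cyclic bicomplex functor is exact (each column is a sum of suspensions of the underlying mixed complex, and direct sums and $\Bbbk$-tensor products are exact), so the short exact sequence of mixed complexes $0\to\CH_*(\C{B})\to\CH_*(\C{A})\to Q_*\to 0$ induces long exact sequences in $HH_*$, in $HC_*$, and --- after applying the exact functor $\Hom_\Bbbk(-,\Bbbk)$ --- in its cohomological dual $HC^*$. Second, by Kassel's comparison theorem a morphism of mixed complexes that is a quasi-isomorphism on Hochschild homology is automatically one on $HC_*$ and on $HC^*$. Applying this to $Q_*\to\widetilde{\CH}_*(\C{A}|\C{B})$ lets me replace $HC_*(Q_*)$ by $HC_*(\C{A}|\C{B})$ and $HC^*(Q_*)$ by $HC^*(\C{A}|\C{B})$ throughout, turning the three long exact sequences into the asserted ones (the dualization reversing the arrows precisely as in the third sequence).

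The main obstacle --- and the only place where anything genuinely new must be checked beyond the Hochschild case already settled --- is the verification that the filtration and the relative-complex projection are compatible with $B$, i.e.\ that the whole diagram lives in the category of mixed complexes and not merely of chain complexes. Once that is secured, r-flatness enters only through the single Hochschild-level quasi-isomorphism $Q_*\to\widetilde{\CH}_*(\C{A}|\C{B})$, and the passage to cyclic (co)homology is formal, via exactness of the cyclic bicomplex functor and Kassel's theorem; in particular no condition on $\mathrm{char}\,\Bbbk$ or on the $\Bbbk$-dimension of $\C{A}$ is required.
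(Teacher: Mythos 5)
Your proposal is correct and follows essentially the same route as the paper: the same filtration of $\bigoplus_n\C{A}^{\otimes n+1}$ by the number of tensor slots equal to $\C{A}$ (counting all $n+1$ slots), collapse of the spectral sequence via r-flatness, and the observation that the filtration is compatible with the cyclic structure. Your explicit verification that Connes' operator $B$ preserves the filtration, together with the passage through mixed complexes and Kassel's comparison theorem, is just a more detailed unpacking of the paper's remark that the filtration is compatible with the $\B{Z}/(n+1)$-actions.
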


\begin{proof}
  In order to prove our assertion for cyclic (co)homology, we first
  need to derive~\eqref{JZPureHochschild} which is a version of the
  Jacobi-Zariski sequence for Hochschild homology.  For this purpose,
  we will need to write a filtration similar to the filtration we
  defined in \eqref{JZHFiltration}.  So, for $0\leq p\leq n+1$ we
  define
  \[ G^p_n = \sum_{n_0+\cdots+n_p = n+1-p} \C{B}^{\otimes n_0}\otimes
  \C{A}\otimes \C{B}^{\otimes n_1}\otimes\cdots\otimes \C{A}\otimes
  \C{B}^{\otimes n_p} \] We let $G^p_n = G^{p+1}_n$ for all $p\geq
  n+1$ and $G^p_n=0$ for $p<0$.  Observe that the filtration is
  compatible with the actions of the cyclic groups $\B{Z}/(n+1)$ at
  every degree $n\geq 0$.  The rest of the proof for obtaining the
  long exact sequence~\eqref{JZPureHochschild} is similar to argument
  we used at the beginning of this section for Hochschild homology
  with coefficients and the proof of Theorem~\ref{JZHomology}.  The
  proof for cyclic (co)homology also follows after our observation
  that the filtration $G^p_*$ is compatible with the actions of the
  cyclic groups.
\end{proof}

Let us use $C_{*\geq n}$ for the \emph{good
  truncation}~\cite[1.2.7]{Weibel:HomologicalAlgebra} of a
differential graded $\Bbbk$-module $C_*$.  Then we have the following:
\begin{theorem}\label{AllInOne}
  Let $\varphi\colon \C{B}\to\C{A}$ be a r-flat morphism of unital
  $\Bbbk$-algebras such that $\C{I}:=ker(\varphi)$ is H-unital.  Let
  $\CH_*(\C{A},\C{B})$ be the homotopy cofiber of the morphism
  $\varphi_*\colon \CH_*(\C{B})\to\CH_*(\C{A})$.  Then there is a
  homotopy cofibration sequence of the form
  \begin{equation}\label{JZSW}
    \Sigma\CH_{*\geq 1}(\C{I})\to \CH_{*\geq 1}(\C{A},\C{B})\to \CH_{*\geq 1}(\C{A}|\C{B}) 
  \end{equation}
  which induces a long exact sequence of the form
  \[\cdots\to HH_{p+2}(\C{A}|\C{B})\to HH_p(\C{I})\to HH_{p+1}(\C{A},\C{B})\to 
    HH_{p+1}(\C{A}|\C{B})\to\cdots\] for $p\geq 1$, and we get an
  isomorphism of the form $HH_1(\C{A},\C{B})\cong HH_1(\C{A}|\C{B})$.
  Analogous sequences exist for the cyclic homology and cohomology
  with no additional hypothesis.
\end{theorem}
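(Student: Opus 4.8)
The plan is to factor $\varphi$ through its image and then assemble the two cofibration sequences already at our disposal --- Wodzicki excision and the Jacobi--Zariski sequence of Theorem~\ref{JacobiZariskiHochschild} --- by means of the octahedral axiom. Write $\C{C}:=im(\varphi)$, so that $\varphi$ factors as $\C{B}\xra{\pi}\C{C}\xra{\iota}\C{A}$, where $\pi$ is the canonical epimorphism onto $\C{C}\cong\C{B}/\C{I}$ and $\iota$ is the inclusion. By hypothesis $\C{I}=ker(\varphi)=ker(\pi)$ is H-unital, and since $\varphi$ is r-flat the extension $\C{C}\subseteq\C{A}$ is r-flat in the sense of Definition~\ref{RFlatExtension}. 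Passing to Hochschild chain complexes gives $\varphi_*=\iota_*\circ\pi_*$, and by the convention recorded in the corollary following Theorem~\ref{JZHomology} the relative complex $\CH_*(\C{A}|\C{B})$ is literally $\CH_*(\C{A}|\C{C})$.

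First I would identify the two outer cofibers. On one hand, Wodzicki's excision theorem applied to the epimorphism $\pi$ with H-unital kernel $\C{I}$ gives a natural quasi-isomorphism between the homotopy cofiber of $\pi_*$ and $\Sigma\CH_*(\C{I})$; this is precisely the characterization of the Wodzicki excision sequence recalled in the introduction. On the other hand, Theorem~\ref{JacobiZariskiHochschild} applied to the r-flat extension $\C{C}\subseteq\C{A}$ exhibits $\CH_*(\C{A}|\C{C})=\CH_*(\C{A}|\C{B})$ as the homotopy cofiber of $\iota_*$, since the long exact sequence produced there coincides with the long exact sequence of the mapping cone of $\iota_*$.

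Next I would invoke the octahedral axiom in the homotopy category of differential $\B{Z}$-graded $\Bbbk$-modules for the composite $\varphi_*=\iota_*\circ\pi_*$. This produces a distinguished triangle
\[ \text{Cone}(\pi_*)\to \text{Cone}(\varphi_*)\to \text{Cone}(\iota_*)\to \Sigma\,\text{Cone}(\pi_*) \]
relating the three mapping cones. Substituting the two identifications of the previous step, together with $\text{Cone}(\varphi_*)=\CH_*(\C{A},\C{B})$ by definition, turns this triangle into
\[ \Sigma\CH_*(\C{I})\to \CH_*(\C{A},\C{B})\to \CH_*(\C{A}|\C{B})\to \Sigma^2\CH_*(\C{I}) \]
which is the homotopy cofibration sequence~\eqref{JZSW}. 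Taking homology of this triangle and using $H_p(\Sigma\CH_*(\C{I}))=HH_{p-1}(\C{I})$ yields the asserted long exact sequence, in which the connecting map $HH_{p+1}(\C{A}|\C{B})\to HH_{p-1}(\C{I})$ is the boundary of the triangle. When $\varphi$ is an epimorphism one has $\C{C}=\C{A}$, the relative term vanishes, and we recover Wodzicki excision; when $\varphi$ is a monomorphism one has $\C{I}=0$, the suspension term vanishes, and we recover Theorem~\ref{JacobiZariskiHochschild}.

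For cyclic homology and cohomology the same three-step argument applies, provided each construction is carried out one level up, at the level of the cyclic $\Bbbk$-modules (equivalently, the mixed complexes) underlying $\C{B}$, $\C{C}$ and $\C{A}$: the maps $\pi_*$, $\iota_*$ and $\varphi_*$ are all induced by algebra morphisms and hence are morphisms of cyclic modules, Wodzicki excision holds for cyclic homology, and the filtration used in Theorem~\ref{Theorem:JZW} was already seen to be compatible with the cyclic group actions in each degree. The main obstacle --- and the only point requiring genuine care --- is therefore to realize the octahedron compatibly with Connes' operator $B$, that is, to produce the two cofiber identifications and the octahedral triangle as morphisms of mixed complexes rather than merely of the underlying Hochschild complexes, so that applying the (co)cyclic homology functors to the resulting triangle produces the stated long exact sequences; the cohomological statement then follows by dualizing over $\Bbbk$ exactly as in the passage from Theorem~\ref{JZHomology} to Lemma~\ref{JZCohomology}.
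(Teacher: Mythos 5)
Your proposal is correct and follows essentially the same route as the paper: factor $\varphi$ through its image, identify the cofiber of $\pi_*$ with $\Sigma\CH_*(\C{I})$ via Wodzicki excision and the cofiber of $\iota_*$ with $\CH_*(\C{A}|\C{B})$ via the Jacobi--Zariski theorem, and glue the two triangles along the composite $\varphi_*=\iota_*\circ\pi_*$. The paper phrases the gluing step via a homotopy cartesian square (Neeman, Lem.~1.4.3) rather than naming the octahedral axiom, but this is the same argument.
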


\begin{proof}
  We will give the proof for the Hochschild homology.  The proofs for
  cyclic homology and cohomology are similar, and therefore, omitted.
  We consider all of our Hochschild chain complexes as differential
  graded $\Bbbk$-modules inside (bounded below) derived category of
  $\Bbbk$-modules.  This is a triangulated category.  In this category
  we have two distinguished triangles: one coming from the Wodzicki
  excision sequence~\cite[Thm.3.1]{Wodzicki:Excision}
  \[ \CH_*(\C{I})\to \CH_*(\C{B})\xra{\pi_*} \CH_*(\C{B}/\C{I})\to
    \Sigma\CH_*(\C{I})\]
  and the other coming from our Jacobi-Zariski sequence:
  \[ \CH_{*\geq 1}(\C{B}/\C{I})\xra{i_*} \CH_{*\geq 1}(\C{A})\to
    \CH_{*\geq 1}(\C{A}|\C{B})\to \Sigma\CH_{*\geq 1}(\C{B}/\C{I}) \]
  where we use the \emph{good truncation} of the Hochschild complexes
  because our Jacobi-Zariski sequence works only for the range
  $p\geq 1$.  Now, consider another homotopy cofibration sequence
  $\CH_{*\geq 1}(\C{B})\xra{\varphi_*} \CH_{*\geq 1}(\C{A})\to
  \CH_{*\geq 1}(\C{A},\C{B})$ where the last term is defined as the
  homotopy cofiber of the morphism
  $\varphi_*\colon\CH_{*\geq 1}(\C{B})\to\CH_{*\geq 1}(\C{A})$.  We
  construct
  \[\xymatrix{
      \CH_{*\geq 1}(\C{B})\ar[r]^{\pi_*}\ar@{=}[d] & 
      \CH_{*\geq 1}(\C{B}/\C{I})\ar[r] \ar[d] & 
      \Sigma\CH_{*\geq 1}(\C{I})\ar[r] \ar@{..>}[d]^{\gamma_*}& 
      \Sigma\CH_{*\geq 1}(\C{B})\ar@{=}[d]\\  
      \CH_{*\geq 1}(\C{B})\ar[r]_{\varphi_*} & 
      \CH_{*\geq 1}(\C{A})\ar[r]  &
      \CH_{*\geq 1}(\C{A},\C{B})\ar[r] & 
      \Sigma\CH_{*\geq 1}(\C{B})
    }\]
  Now, $\gamma_*$ exists because the (bounded below) derived category
  of $\Bbbk$-modules is a triangulated category.  We also have that
  the middle square is homotopy
  cartesian~\cite[Lem.1.4.3]{Neeman:TriangulatedCategories}.  Thus we
  get a distinguished triangle of the form
  \[ \Sigma\CH_{*\geq 1}(\C{I})\xra{\gamma_*} \CH_{*\geq
      1}(\C{A},\C{B})\to \CH_{*\geq 1}(\C{A}|\C{B})\to
    \Sigma^2\CH_{*\geq 1}(\C{I}) \] This follows from
  Theorem~\ref{Theorem:JZW} since $\CH_{*\geq 1}(\C{A}|\C{B})$ is the
  homotopy cofiber of the morphism
  $\CH_{*\geq 1}(\C{B}/\C{I})\to \CH_{*\geq 1}(\C{A})$ induced by the
  inclusion $im(\varphi)\subseteq \C{A}$.  This finishes the proof.
\end{proof}

One can easily see that Theorem~\ref{AllInOne} gives us the
Jacobi-Zariski sequence when $\varphi$ is monomorphism with an r-flat
image and the Wodzicki excision sequence when $\varphi$ is an
epimorphism with an H-unital kernel, for Hochschild homology and
cyclic (co)homology.

\begin{conjecture}\label{Conjecture}
  We believe that an appropriate analogue of Theorem~\ref{Theorem:JZW}
  holds for Hochschild cohomology when $p$ is large enough.  However,
  due to its non-functoriality, to prove such an analogue for
  Hochschild cohomology is not a straightforward task.  If
  $\C{B}\subseteq \C{A}$ is an arbitrary extension, we have a homotopy
  cofibration sequence of the form $\CH^*(\C{B})\to
  \CH^*(\C{B},\C{A})\to \CH^*(\C{B},\C{A}/\C{B})$ coming from the
  short exact sequence of $\C{B}$-bimodules $0\to \C{B}\to \C{A}\to
  \C{A}/\C{B}\to 0$. The last term $\CH^*(\C{B},\C{A}/\C{B})$
  calculates $HH^*(\C{B},\C{A}/\C{B})$ which is
  $\Ext_{\C{B}^e}^*(\C{B},\C{A}/\C{B})$. So, we have $HH^n(\C{B})\cong
  HH^n(\C{B},\C{A})$ for $n\geq 2$ when $\C{A}/\C{B}$ is an injective
  $\C{B}^e$-module.  If $\C{A}$ is approximately finite dimensional
  and $\C{A}/\C{B}$ is $\C{B}^e$-flat on top of being
  $\C{B}^e$-injective, we can use
  Theorem~\ref{JacobiZariskiHochschild} where we set $M=\C{A}$ to
  obtain an analogue of Theorem~\ref{Theorem:JZW} for Hochschild
  cohomology but for $p\geq 2$.  There are similar results pointing in
  this direction. See for example
  \cite[Thm.5.3]{Happel:HochschildCohomologyOfFiniteDimensionalAlgebras},
  \cite[Thm.4.5]{Cibils:TensorHochschild} and
  \cite{MichelenaPlatzeck:HochschildCohomologyOfTriangularAlgebras}.
\end{conjecture}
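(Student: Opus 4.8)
The goal is to establish, under the hypotheses named in the conjecture (that $\C{A}$ is approximately finite dimensional and that the $\C{B}$-bimodule $\C{A}/\C{B}$ is both $\C{B}^e$-flat and $\C{B}^e$-injective), a long exact sequence
\[ \cdots\to HH^{p}(\C{A}|\C{B})\to HH^{p}(\C{A})\to HH^{p}(\C{B})\to HH^{p+1}(\C{A}|\C{B})\to\cdots \]
in the range $p\geq 2$, where $HH^{p}(\C{A}|\C{B}):=HH^{p}(\C{A}|\C{B},\C{A})$. The plan is \emph{not} to dualize the homotopy-cofiber construction of Theorem~\ref{AllInOne} directly: Hochschild cohomology is not functorial in the algebra variable, so for the inclusion $\C{B}\subseteq\C{A}$ there is no candidate cochain map $\CH^{*}(\C{A})\to\CH^{*}(\C{B})$ whose cofiber one could take. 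Instead I would run the cohomology half of Theorem~\ref{JacobiZariskiHochschild} with a single \emph{fixed} coefficient bimodule $M=\C{A}$, and then convert the resulting fixed-coefficient term for $\C{B}$ into the absolute group $HH^{*}(\C{B})=HH^{*}(\C{B},\C{B})$ by a dimension-shifting argument.

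The one genuinely new input is the replacement of $HH^{p}(\C{B})$ by $HH^{p}(\C{B},\C{A})$ in high degrees. The short exact sequence of $\C{B}$-bimodules $0\to\C{B}\to\C{A}\to\C{A}/\C{B}\to 0$ is a short exact sequence of $\C{B}^e$-modules, so applying the cohomological $\delta$-functor $HH^{*}(\C{B},-)=\Ext_{\C{B}^e}^{*}(\C{B},-)$ yields a long exact sequence
\[ \cdots\to HH^{n}(\C{B},\C{B})\to HH^{n}(\C{B},\C{A})\to HH^{n}(\C{B},\C{A}/\C{B})\to HH^{n+1}(\C{B},\C{B})\to\cdots \]
Since $\C{A}/\C{B}$ is $\C{B}^e$-injective, the terms $HH^{n}(\C{B},\C{A}/\C{B})=\Ext_{\C{B}^e}^{n}(\C{B},\C{A}/\C{B})$ vanish for every $n\geq 1$. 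Reading off the four-term windows, the inclusion $\C{B}\hookrightarrow\C{A}$ of coefficient bimodules then induces isomorphisms $HH^{n}(\C{B})\cong HH^{n}(\C{B},\C{A})$ for all $n\geq 2$; at $n=1$ one gets only a surjection, which is precisely the source of the degree restriction.

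Next I would invoke the cohomology statement of Theorem~\ref{JacobiZariskiHochschild} with the approximately finite dimensional coefficient bimodule $M=\C{A}$. The $\C{B}^e$-flatness of $\C{A}/\C{B}$ forces one-sided $\C{B}$-flatness as well, since $\C{B}^e=\C{B}\otimes\C{B}^{op}$ is free over $\C{B}$ and flatness is transitive along a flat extension; hence $\C{B}\subseteq\C{A}$ is r-flat and the theorem applies, giving
\[ \cdots\to HH^{p}(\C{A}|\C{B},\C{A})\to HH^{p}(\C{A},\C{A})\to HH^{p}(\C{B},\C{A})\to HH^{p+1}(\C{A}|\C{B},\C{A})\to\cdots \]
where the middle arrow is the restriction along $\C{B}\subseteq\C{A}$, which is well defined exactly because the coefficient module is held fixed. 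Writing $HH^{p}(\C{A})=HH^{p}(\C{A},\C{A})$, setting $HH^{p}(\C{A}|\C{B}):=HH^{p}(\C{A}|\C{B},\C{A})$, and substituting the isomorphisms $HH^{p}(\C{B},\C{A})\cong HH^{p}(\C{B})$ of the previous step, valid for $p\geq 2$, produces the claimed sequence in the range $p\geq 2$. The composite $HH^{p}(\C{A})\to HH^{p}(\C{B},\C{A})\xleftarrow{\sim} HH^{p}(\C{B})$ supplies the comparison map that Hochschild cohomology otherwise lacks.

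The main obstacle, and the reason the statement is posed as a conjecture rather than proved outright, is concentrated entirely in this coefficient-swapping step. The non-functoriality of $HH^{*}(-)$ forces the detour through the fixed coefficient $\C{A}$, and the price is the $\C{B}^e$-injectivity hypothesis, which is what annihilates $HH^{*}(\C{B},\C{A}/\C{B})$; without it the sequence acquires an uncontrolled correction coming from $HH^{*}(\C{B},\C{A}/\C{B})$ and the clean three-term shape of Theorem~\ref{Theorem:JZW} breaks down. I expect the genuinely hard problems to be (i) removing the injectivity assumption, which would require constructing the comparison map $HH^{*}(\C{A})\to HH^{*}(\C{B})$ \emph{intrinsically} at the level of cochain complexes in a suitable derived category, rather than borrowing it through a fixed coefficient module; and (ii) controlling the low degrees $p=0,1$, where the $\delta$-functor argument yields only surjectivity and where the relative term interacts with $HH^{0}$ and $HH^{1}$ in a manner the present method cannot resolve. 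The special-case results of \cite[Thm.5.3]{Happel:HochschildCohomologyOfFiniteDimensionalAlgebras}, \cite[Thm.4.5]{Cibils:TensorHochschild} and \cite{MichelenaPlatzeck:HochschildCohomologyOfTriangularAlgebras} indicate the shape the fully general statement should take and would be the natural test cases against which to calibrate any such intrinsic construction.
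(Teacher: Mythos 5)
Your proposal follows essentially the same route the paper itself sketches inside Conjecture~\ref{Conjecture}: apply the $\delta$-functor $HH^*(\C{B},-)=\Ext^*_{\C{B}^e}(\C{B},-)$ to $0\to\C{B}\to\C{A}\to\C{A}/\C{B}\to 0$, use $\C{B}^e$-injectivity of $\C{A}/\C{B}$ to get $HH^n(\C{B})\cong HH^n(\C{B},\C{A})$ for $n\geq 2$ (only a surjection at $n=1$), and then splice these isomorphisms into the cohomology sequence of Theorem~\ref{JacobiZariskiHochschild} with the fixed coefficient $M=\C{A}$, which is valid since $\C{A}$ is approximately finite dimensional. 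Your additional observation that $\C{B}^e$-flatness of $\C{A}/\C{B}$ restricts to one-sided $\C{B}$-flatness (as $\C{B}^e$ is free over $\C{B}$, the ground ring being a field), so that the extension is indeed r-flat, is a correct and worthwhile filling-in of a step the paper leaves implicit.
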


\end{document}